\newtheorem{thm}{Theorem}[section]
\newtheorem{cor}[thm]{Corollary}
\newtheorem{prop}[thm]{Proposition}
\newtheorem{lem}[thm]{Lemma}
\theoremstyle{definition}
\newtheorem{defin}[thm]{Definition}
\newtheorem{exa}[thm]{Example}
\numberwithin{equation}{section}
\def\eq#1{{\rm(\ref{#1})}}
\def\Eq#1#2{\ifthenelse{\equal{#1}{*}}
  {\begin{equation*}\begin{aligned}[]#2\end{aligned}\end{equation*}}
  {\begin{equation}\begin{aligned}[]\label{#1}#2\end{aligned}\end{equation}}}
\def\A{\mathscr{A}}
\def\D{\mathscr{D}}
\def\LD{\underline{\D}}
\def\UD{\overline{\D}}
\def\LLD{\underline{\underline{\D}}}
\def\UUD{\overline{\overline{\D}}}
\def\E{\mathscr{E}}
\def\LE{\underline{\E}}
\def\UE{\overline{\E}}
\def\LLE{\underline{\underline{\E}}}
\def\UUE{\overline{\overline{\E}}}
\def\M{\mathscr{M}}
\def\P{\mathscr{P}}
\def\Err{\mathcal{O}}
\newcommand\R{\mathbb{R}}
\newcommand\N{\mathbb{N}}
\newcommand\Q{\mathbb{Q}}
\newcommand{\QA}[1]{\A_{#1}}
\DeclareMathOperator{\sign}{sign}
\title
{On the homogenization of means}
\author{Zsolt P\'ales}
\address{Institute of Mathematics, University of Debrecen, Pf.\ 400, 4002 Debrecen, Hungary}
\email{pales@science.unideb.hu}
\author{Pawe\l{} Pasteczka}
\address{Institute of Mathematics, Pedagogical University of Cracow,  Podchor\k{a}\.{z}ych str 2, 30-084 Cracow, Poland}
\email{pawel.pasteczka@up.krakow.pl}
\thanks{The research of the first author was supported by the Hungarian Scientific Research Fund (OTKA) Grant K-111651 and by the EFOP-3.6.2-16-2017-00015 project. This project is co-financed by the European Union and the European Social Fund.}
\keywords{Weighted mean, quasiarithmetic mean, semideviation mean, homogenization, comparison, Jensen concavity, Hölder and Minkowski inequality}
\subjclass[2010]{26D10, 26E60}
\begin{document}
\begin{abstract}
The aim of this paper is to introduce several notions of homogenization in various classes of weighted means, which include quasiarithmetic and semideviation means. In general, the homogenization is an operator which attaches a homogeneous mean to a given one. Our results show that, under some regularity or convexity assumptions, the homogenization of quasiarithmetic means are power means, and homogenization of semideviation means are homogeneous semideviation means. In other results, we characterize the comparison inequality, the Jensen concavity, and Minkowski- and Hölder-type inequalities related to semideviation means.
\end{abstract}

\maketitle
\section{Introduction}

The investigation of means concentrates to two, somehow independent, subfields. The first subfield is related to the extensive study of homogeneous means which includes comparison and characterization problems, invariance equations and Minkow\-ski- and Hölder-type inequalities (cf.\ \cite{BajPal09b}, \cite{BajPal10}, \cite{BajPal13}, \cite{LosPal96}, \cite{LosPal98}, \cite{Pal82c}, \cite{Pal83c}, \cite{Pal88b}, \cite{Pal88c}, \cite{Pal89c}, \cite{Pal92a}). The postulated domain of homogeneous means is usually the set of positive numbers and the means considered are power means, Gini means, Stolarsky means, symmetric polynomial means, counter-harmonic means, etc. 

On the other hand, a rich literature is devoted to the investigation of means which are not necessarily homogeneous. What is important in that setting, is that the freedom in nonhomogeneous families is fairly bigger. Namely, the means considered therein are depending on functions instead of parameters (which is usually the case for homogeneous means). 

Perhaps the most classical illustration is the family of quasiarithmetic means and the related family of power means (cf.\ \cite{BecBel61}, \cite{Bul03}, \cite{HarLitPol34}, \cite{MitPecFin91}, \cite{MitPecFin93} and the references therein). The first family is defined in terms of functions, the second in terms of a real parameter. Nevertheless, homogeneity is sometimes very natural -- for example, in actuarial mathematics, it is known as ``currency invariance''. Therefore, it seems to be an important issue to investigate and clarify how functional means are related to homogeneous means.

In this paper we introduce two constructions for the homogenization of weighted means (Section~\ref{sec:homomean}). In Section~\ref{sec:examples} we will deal with the homogenizations related to  quasiarithmetic means. 

The main class, where we elaborate the notions related to homogenization are  semideviation means. They naturally extend the classes of deviation means and quasideviation means introduced by Daróczy \cite{Dar71b} and Páles \cite{Pal83b}, \cite{Pal88a}, respectively. Some of our main results offer characterizations of the comparison and of Hölder- and Minkowski-type inequalities for semideviation means. The results so obtained extend those of \cite{Dar72b} and \cite{Pal88a}.

As a matter of fact, our considerations are motivated by studying the weighted Hardy-type inequalities (see \cite{PalPas18a}, \cite{PalPas18b}). They turned out to be closely related to the Kedlaya-type inequalities \cite{Ked94}, \cite{Ked99}. On the other hand, the Hardy property is implicitly connected to homogeneity, furthermore in its characterizations, the Jensen-concavity of means plays a very important role. Motivated by these preliminaries, we provide a thorough study, among others, of these properties for different concepts of homogenizations.


\subsection{Weighted means}

To describe the abstract framework for our considerations, we recall the concept of weighted means as it was introduced in the paper \cite{PalPas18b}.

\begin{defin}[Weighted means]
Let $I \subset \R$ be an arbitrary interval, $R \subset \R$ be a ring and, for 
$n\in\N$, define the set of $n$-dimensional weight vectors $W_n(R)$ by
\Eq{*}{
  W_n(R):=\{(\lambda_1,\dots,\lambda_n)\in R^n\mid\lambda_1,\dots,\lambda_n\geq0,\,\lambda_1+\dots+\lambda_n>0\}.
}
\emph{A weighted mean on $I$ over $R$} or, in other words, \emph{an $R$-weighted mean on $I$} is a function 
\Eq{*}{
\M \colon \bigcup_{n=1}^{\infty} I^n \times W_n(R) \to I
}
satisfying the conditions (i)--(iv) presented below.
Elements belonging to $I$ will be called \emph{entries}; elements from $R$ -- \emph{weights}. 

\begin{enumerate}[(i)]
 \item \emph{Nullhomogeneity in the weights}: For all $n \in \N$, for all $(x,\lambda) \in I^n \times W_n(R)$, 
and $t \in R_+$,
 \Eq{*}{
   \M(x,\lambda)=\M(x,t \cdot \lambda),
 }
\item \emph{Reduction principle}: For all $n \in \N$ and for all $x \in I^n$, $\lambda,\mu \in W_n(R)$, 
\Eq{*}{
\M(x,\lambda+\mu)=\M(x\odot x,\lambda\odot\mu),
}
where $\odot$ is a \emph{shuffle operator} defined as
\Eq{*}{
(p_1,\dots,p_n)\odot (q_1,\dots,q_n):=(p_1,q_1,\dots,p_n,q_n).
}
\item \emph{Mean value property}: For all $n \in \N$, for all $(x,\lambda) \in I^n \times W_n(R)$
\Eq{*}{
\min(x_1,\dots,x_n) \le \M(x,\lambda)\le \max(x_1,\dots,x_n),
}
\item \emph{Elimination principle}: For all $n \in \N$, for all $(x,\lambda) \in I^n \times W_n(R)$ and for 
all $j\in\{1,\dots,n\}$ such that $\lambda_j =0$,
\Eq{*}{
\M(x,\lambda) = \M\big((x_i)_{i\in\{1,\dots,n\}\setminus\{j\}},(\lambda_i)_{i\in\{1,\dots,n\}\setminus\{j\}}\big),
}
i.e., entries with a zero weight can be omitted. 
\end{enumerate}
\end{defin}

Let us now introduce some important properties of weighted means. A weighted mean $\M$ is said to be \emph{symmetric}, if for all $n \in \N$, 
$(x,\lambda) \in I^n\times W_n(R)$, and $\sigma \in S_n$, 
\Eq{*}{
\M(x,\lambda) =\M(x\circ\sigma,\lambda\circ\sigma).
}
We will call a weighted mean $\M$ \emph{Jensen concave} if, for all $n \in \N$, $x,y \in I^n$ and $\lambda \in W_n(R)$,
\Eq{E:JF2}{
\M \Big( \frac{x+y}2 , \lambda \Big) \ge\frac12 \big( \M(x,\lambda)+\M(y,\lambda) \big).
}
If the above inequality holds with reversed inequality sign, then we speak about the \emph{Jensen convexity} of $\M$. Using that the mapping 
$x\mapsto\M(x,\lambda)$ is locally bounded, the Bernstein--Doetsch Theorem \cite{BerDoe15} implies that $\M$ is in fact concave or convex, respectively. 

A weighted mean $\M$ is said to be \emph{monotone} if, for all $n \in \N$ and $\lambda \in W_n(R)$, the mapping $x_i \mapsto \M(x,\lambda)$ is nondecreasing for all $i \in \{1,\dots,n\}$.

Finally, assuming that $I$ is a subinterval of $\R_+$, we call a weighted mean $\M$ \emph{homogeneous}, if for all $t>0$, $n\in\N$ and 
$(x,\lambda)\in \big(I\cap\frac1t I\big)^n\times W_n(R)$,
\Eq{*}{
  \M(tx,\lambda)=t\M(x,\lambda).
}

\section{\label{sec:homomean}Homogenizations of weighted means}

There are several possibilities to construct a homogeneous weighted mean out of a given one. To illustrate the main ideas let $\M \colon \bigcup_{n=1}^{\infty} I^n \times W_n(R) \to I$ be a given $R$-weighted mean.

\subsection{Homogeneous envelopes}

In what follows, we can construct two $R$-weighted means $\M_\$,\M^\$\colon \bigcup_{n=1}^{\infty} I^n \times W_n(R) \to I$ such that $\M_\$\leq\M\leq\M^\$$ in the following manner: For $n\in\N$, $x\in I^n$ and $\lambda\in W_n(R)$, define
\Eq{*}{
  \M_\$(x,\lambda):=\inf\big\{\tfrac1t\M(tx,\lambda)\mid tx\in I^n\big\}
  \quad\mbox{and}\quad
  \M^\$(x,\lambda):=\sup\big\{\tfrac1t\M(tx,\lambda)\mid tx\in I^n\big\}.
}
One can easily see that $\M_\$$ and $\M^\$$ are the largest and smallest $R$-weighted homogeneous means on $I$ such that $\M_\$\leq\M\leq\M^\$$ holds. Therefore, these means will be called the \emph{lower and upper homogeneous envelopes of $\M$}, respectively. It is obvious that $\M$ is homogeneous if and only $\M_\$=\M^\$$. It is also easy to check that if $\M$ is symmetric and monotone, then also $\M_\$$ and $\M^\$$ are symmetric and monotone, respectively. The gap between the lower and upper envelopes, however, could be too large, therefore it seems to be important to construct homogeneous means related to $\M$ that could provide us more precise information on $\M$.

\subsection{Local homogenizations}

For our next construction, we assume that $\inf I=0$. In this case, for every $x\in \R_+^n$, the inclusion $tx\in I^n$ holds for small positive $t$ which allows us to perform the limit operations as $t\to0$. For an $R$-weighted mean $\M$ on $I$ we now introduce the following homogenizations $\M_\#,\M^\# \colon \bigcup_{n=1}^{\infty} \R_+^n \times W_n(R) \to \R_+$ by
\Eq{*}{
  \M_\#(x,\lambda):=\liminf_{t\to 0}\tfrac1t\M(tx,\lambda) \qquad\mbox{and}\qquad \M^\#(x,\lambda):=\limsup_{t\to 0}\tfrac1t\M(tx,\lambda).
}
We call $\M_\#$ and $\M^\#$ the \emph{lower and upper (local) homogenization} of the $R$-weighted mean $\M$, respectively. It is obvious that $\M_\#$ and $\M^\#$ are homogeneous $R$-weighted means on $\R_+$, furthermore, we have the inequality $\M_\#\leq\M^\#$ on $\R_+$ and the inequalities $\M_\$\leq\M_\#\leq\M^\#\leq\M^\$$ on $I$. Furthermore, the mean $\M$ is homogeneous if and only if the equality $\M=\M_\#=\M^\#$ holds on the domain of $\M$. It is also easy to see that if $\M$ is symmetric and monotone, then also $\M_\#$ and $\M^\#$ are symmetric and monotone, respectively.

In the following result we establish a sufficient condition for the equality of $\M_\#$ and $\M^\#$. 

\begin{thm}
\label{thm:M*}
Let $I$ be a subinterval of $\R_+$ with $\inf I=0$ and $\M$ be a Jensen concave $R$-weighted mean on $I$. Then $\M_\#=\M^\#$ and these means are also Jensen concave. In addition, $\M\leq\M_\#=\M^\#$ on the domain of $\M$.
\end{thm}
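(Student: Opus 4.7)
Fix $(x,\lambda)\in\R_+^n\times W_n(R)$ and choose $t_0>0$ small enough that $t_0 x\in I^n$; this is possible since $\inf I=0$. The plan is to reduce the statement to the behaviour of the single-variable function $g\colon (0,t_0]\to\R$ defined by $g(t):=\M(tx,\lambda)$. The Jensen concavity of $\M$, together with the Bernstein--Doetsch consequence already recorded in the text, makes $y\mapsto\M(y,\lambda)$ concave on $I^n$, so $g$ is concave on $(0,t_0]$. The mean value property gives
\Eq{*}{
t\min_i x_i \le g(t)\le t\max_i x_i,
}
hence $g(t)\to 0$ as $t\to 0^+$ and the ratio $g(t)/t$ stays in $[\min_i x_i,\max_i x_i]$.

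Next I would extend $g$ to $[0,t_0]$ by declaring $g(0):=0$ and verify that this extension remains concave. The only nontrivial inequalities to check are of the form $g((1-\alpha)t)\ge(1-\alpha)g(t)$ for $0<\alpha<1$, and these follow by applying concavity of $g$ on $[u,t]$ with $u\to 0^+$. I then invoke the classical one-line fact: if $g\colon[0,t_0]\to\R$ is concave with $g(0)=0$, then $t\mapsto g(t)/t$ is nonincreasing on $(0,t_0]$, since writing $s=(s/t)\cdot t+(1-s/t)\cdot 0$ and using concavity yields $g(s)\ge(s/t)g(t)$. Consequently $\lim_{t\to 0^+}g(t)/t$ exists in $\R_+$ and equals $\sup_{t\in(0,t_0]}g(t)/t$, which proves $\M_\#(x,\lambda)=\M^\#(x,\lambda)$.

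To derive the remaining two assertions, I would observe that when $x\in I^n$ we may take $t=1$ in the supremum, giving $\M(x,\lambda)=g(1)\le\sup_t g(t)/t=\M_\#(x,\lambda)$; this is the inequality $\M\le\M_\#=\M^\#$ on the domain of $\M$. Jensen concavity of the common homogenization follows by passing to the limit in the Jensen inequality of $\M$: for $x,y\in\R_+^n$ and $t>0$ sufficiently small,
\Eq{*}{
\tfrac{1}{t}\M\!\left(\tfrac{tx+ty}{2},\lambda\right)\ge \tfrac{1}{2}\!\left(\tfrac{1}{t}\M(tx,\lambda)+\tfrac{1}{t}\M(ty,\lambda)\right),
}
and since both sides have genuine limits as $t\to 0^+$, these limits satisfy the same inequality, which is precisely \eq{E:JF2} for $\M_\#$.

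The only point that requires a little care is the extension of $g$ to $t=0$ together with the preservation of concavity, so that the monotonicity-of-the-secant-slope lemma becomes available; once that is in place, the theorem is a direct translation of definitions and I do not expect a serious obstacle.
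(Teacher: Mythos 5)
Your proposal is correct and takes essentially the same route as the paper: both arguments reduce to the concavity of $t\mapsto\M(tx,\lambda)$, use the mean value property to force the value $0$ at $t=0$, deduce monotonicity of the secant slope $\M(tx,\lambda)/t$ to get the common limit and the bound $\M\le\M_\#$, and then pass to the limit in the Jensen inequality. Your explicit extension of $g$ to $t=0$ (and the limiting argument $u\to0^+$ that justifies it) is just a repackaging of the paper's decreasing-difference-ratio step, so there is no substantive difference.
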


\begin{proof} Let $n\in\N$ and $(x,\lambda) \in \R_+^n\times W_n(R)$ be fixed. Choose $\tau>0$ such that $\tau x\in I^n$. By the mean value property of $\M$, for all $t\in(0,\tau]$, we have 
\Eq{*}{
t\min(x_1,\dots,x_n) \le \M(tx,\lambda)\le t\max(x_1,\dots,x_n).
}
Therefore,
\Eq{*}{
  \lim_{t\to 0}\M(tx,\lambda)=0=:m(0).
}
On the other hand, by the concavity of the mean $\M$, the mapping $m(t):=\M(tx,\lambda)$ is concave, therefore the difference ratio 
\Eq{*}{
\frac{m(t)-m(0)}{t}=\frac{m(t)}{t}=\frac{1}{t}\M(tx,\lambda)
}
is a decreasing function of $t$ on the interval $(0,\tau]$. Thus, its right limit at zero exists, which implies the equality $\M_\#(x,\lambda)=\M^\#(x,\lambda)$. 

If $x\in I^n$, then $\tau=1$ can be chosen. Therefore, for all $t\in(0,1]$,
\Eq{*}{
\M(x,\lambda)=m(1)=\frac{m(1)-m(0)}{1}\leq\frac{m(t)-m(0)}{t}=\frac{m(t)}{t}=\frac{1}{t}\M(tx,\lambda).
}
As $t\to0$, the inequality $\M(x,\lambda)\leq \M_\#(x,\lambda)=\M^\#(x,\lambda)$ follows. 

To see that $\M_\#=\M^\#$ is Jensen concave, let $x,y\in \R_+^n$ and $\lambda\in W_n(R)$. Then there exists $\tau>0$ such that $\tau x,\tau y\in I^n$. Now, by the Jensen concavity of $\M(\cdot,\lambda)$, we get 
\Eq{*}{
  \frac{1}{t}\M\Big(t\frac{x+y}2,\lambda\Big)\geq \frac12\Big(\frac{1}{t}\M(tx,\lambda)+\frac{1}{t}\M(ty,\lambda)\Big) \qquad(t\in(0,\tau]).
}
Upon taking the limit $t\to0$, we obtain that
\Eq{*}{
  \M_\#\Big(\frac{x+y}2,\lambda\Big)
    \geq \frac12\big(\M_\#(x,\lambda)+\M_\#(y,\lambda)\big),
}
which proves the Jensen concavity of $\M_\#=\M^\#$.
\end{proof}

In the first part of the proof, we did not extensively use the Jensen concavity of the mean $\M$, we needed only the Jensen concavity of the map $t\mapsto\M(tx,\lambda)$ for all fixed pair $(x,\lambda)$. 

One can also get an analogous statement for Jensen convex means. In this case, the equality $\M_\#=\M^\#$ is also valid, these means are Jensen convex and $\M\geq\M_\#=\M^\#$ holds on the domain of $\M$.

\section{\label{sec:examples}Homogenizations of weighted quasiarithmetic means}

First we recall two basic classes of weighted means. 
For a parameter $p\in\R$, define the \emph{weighted power mean} $\P_p\colon \bigcup_{n=1}^{\infty} \R_+^n \times W_n(\R) \to \R_+$ by
\Eq{*}{
\P_p (x,\lambda):= 
\begin{cases} 
\left(\frac{\lambda_1x_1^p+\cdots+\lambda_nx_n^p}{\lambda_1+\cdots+\lambda_n} \right)^{1/p} &\quad \text{ if } p \ne 0, \\                                                          
\left(x_1^{\lambda_1}\cdots x_n^{\lambda_n} \right)^{1/(\lambda_1+\cdots+\lambda_n)} &\quad \text{ if } p = 0,\\
\min\big\{x_i\mid i\in\{1,\dots,n\},\,\lambda_i>0\big\} &\quad \text{ if }p = -\infty,\\
\max\big\{x_i\mid i\in\{1,\dots,n\},\,\lambda_i>0\big\} &\quad \text{ if }p = +\infty.
\end{cases}
}
It is well-known that, for every nonconstant vector $x\in I^n$ and positive weight vector $\lambda\in\R_+^n$, the mapping
\Eq{*}{
  [-\infty,\infty]\ni p\mapsto\P_p(x,\lambda)\in \big[\P_{-\infty}(x,\lambda),\P_{\infty}(x,\lambda)\big]
}
is an increasing bijection.

In a more general setting, we can define weighted quasiarithmetic means in the spirit of \cite{HarLitPol34}. Given an interval $I$ and a continuous strictly monotone function $f \colon I \to \R$, the \emph{weighted quasiarithmetic mean} 
$\QA{f} \colon \bigcup_{n=1}^{\infty} I^n \times W_n(\R) \to I$ is defined by
\Eq{QA}{
\QA{f}(x,\lambda):= f^{-1} \left( \frac{\lambda_1 f(x_1)+\cdots+\lambda_nf(x_n)}{\lambda_1+\cdots+\lambda_n} \right).
}

With the notations $\pi_p(x):=x^p$ if $p\ne 0$ and $\pi_0(x):=\ln x$, we can immediately see that the equality $\QA{\pi_p}=\P_p$ holds. It is well-known that the homogeneous quasiarithmetic means are exactly the power means (cf. \cite{HarLitPol34}, \cite[Theorem 4]{Pal00a}, \cite{Pas15a}).

In 1948, Mikusi\'nski proved that the comparability problem within this family  (under natural smoothness 
assumptions) is equivalent to the pointwise comparability of the mapping $f\mapsto \frac{f''}{f'}$ (negative of this 
operator is called the \emph{Arrow-Pratt index of absolute risk aversion}; cf.\ \cite{Arr65}, \cite{Pra64} ). More precisely, he proved

\begin{prop}\label{Mik}
Let $I \subset \R$ be an interval, $f,\,g \colon I \to \R$ be twice differentiable functions having nowhere vanishing  
first derivative. Then the following two conditions are equivalent
 \begin{itemize}
 \item[\upshape{(i)}] $\QA{f}(x,\lambda)\leq\QA{g}(x,\lambda)$ 
 for all $n\in\N$ and $(x,\lambda)\in I^n\times W_n(\R)$;
 \item[\upshape{(ii)}] $\frac{f''(x)}{f'(x)}\leq \frac{g''(x)}{g'(x)}$ for all $x\in I$.
 \end{itemize}
\end{prop}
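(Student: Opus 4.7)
The plan is to handle the two implications by different techniques. For (ii)$\Rightarrow$(i) I would rewrite the comparison $\QA{f}\leq\QA{g}$ as a Jensen-type inequality for the transfer function $h:=g\circ f^{-1}\colon f(I)\to\R$, while for the converse (i)$\Rightarrow$(ii) I would extract the pointwise inequality from a second-order Taylor expansion on a two-entry configuration.

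For (ii)$\Rightarrow$(i), the key calculation is that, writing $y=f(x)$, the function $h=g\circ f^{-1}$ satisfies
\[
h''(y) = \frac{g''(x)f'(x)-g'(x)f''(x)}{(f'(x))^{3}} = \frac{g'(x)}{(f'(x))^{2}}\left(\frac{g''(x)}{g'(x)}-\frac{f''(x)}{f'(x)}\right).
\]
Because $f'$ and $g'$ are continuous and nowhere vanishing, each has a constant sign on $I$; under (ii) the sign of $h''$ then coincides with the sign of $g'$. The argument now splits according to $\sign(g')$. If $g'>0$, then $h$ is convex and $g^{-1}$ is increasing; applying the ordinary (discrete) Jensen inequality to $h$ at the points $f(x_i)$ with normalized weights $\lambda_i/(\lambda_1+\cdots+\lambda_n)$ yields $g(\QA{f}(x,\lambda))\leq \sum\lambda_i g(x_i)/\sum\lambda_j$, whence $\QA{f}\leq\QA{g}$. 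If $g'<0$, then $h$ is concave but $g^{-1}$ reverses inequalities, and the two sign reversals cancel. The sign of $f'$ plays no role since replacing $f$ by $-f$ leaves both $\QA{f}$ and $f''/f'$ unchanged.

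For (i)$\Rightarrow$(ii), fix $x_0\in I$ and apply the comparison to the two-entry configuration $x_1=x_0+s$, $x_2=x_0+t$ with a fixed positive weight vector $(\lambda_1,\lambda_2)$, regarding $s,t$ as small real parameters. Using $f(x_0+u)=f(x_0)+f'(x_0)u+\tfrac12 f''(x_0)u^{2}+o(u^{2})$ and the corresponding inversion $f^{-1}(f(x_0)+v)=x_0+v/f'(x_0)-\tfrac{f''(x_0)}{2(f'(x_0))^{3}}v^{2}+o(v^{2})$, a direct computation gives
\[
\QA{f}(x_1,x_2;\lambda_1,\lambda_2) = x_0 + A + \frac{f''(x_0)}{2\,f'(x_0)}\bigl(B-A^{2}\bigr) + o(s^{2}+t^{2}),
\]
with $A:=(\lambda_1 s+\lambda_2 t)/(\lambda_1+\lambda_2)$ and $B:=(\lambda_1 s^{2}+\lambda_2 t^{2})/(\lambda_1+\lambda_2)$, and the analogous identity for $\QA{g}$. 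Subtracting, the hypothesis $\QA{f}\leq\QA{g}$ forces
\[
0\leq \QA{g}-\QA{f} = \tfrac12\bigl(B-A^{2}\bigr)\left(\frac{g''(x_0)}{g'(x_0)}-\frac{f''(x_0)}{f'(x_0)}\right) + o(s^{2}+t^{2}).
\]
Since $B-A^{2}$ is the weighted variance of $(s,t)$, it is strictly positive whenever $s\ne t$; taking $s=\varepsilon$, $t=-\varepsilon$, dividing by $\varepsilon^{2}$ and letting $\varepsilon\to 0$ yields $f''(x_0)/f'(x_0)\leq g''(x_0)/g'(x_0)$.

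The main obstacle is the sign bookkeeping in the forward direction: the interplay between the sign of $h''$, the convexity type of $h$, and the monotonicity of $g^{-1}$ must be tracked across the possible sign combinations of $(f',g')$. The Taylor expansion in the converse is computationally the longest step but is routine once the second-order expansion of $f^{-1}$ around $f(x_0)$ is available; no regularity beyond the stated twice differentiability is required.
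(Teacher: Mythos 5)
Your proof is correct. Note that the paper itself offers no argument for this proposition: it is quoted as Mikusiński's classical theorem (cf.\ also Hardy--Littlewood--P\'olya), so there is no ``paper proof'' to match; your write-up is a valid self-contained substitute. The forward direction is the standard route: the identity $h''(y)=\frac{g'(x)}{(f'(x))^2}\bigl(\frac{g''(x)}{g'(x)}-\frac{f''(x)}{f'(x)}\bigr)$ for $h=g\circ f^{-1}$, $y=f(x)$, is right (and $f^{-1}$, hence $h$, is indeed twice differentiable because $f'\neq0$), and your case split on $\sign(g')$ with Jensen's inequality and the monotonicity of $g^{-1}$ closes it; the observation that the sign of $f'$ is immaterial is correct since $\A_{-f}=\A_f$ and $(-f)''/(-f)'=f''/f'$. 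For the converse, your second-order expansion is accurate: $v=f'(x_0)A+\tfrac12 f''(x_0)B+o(s^2+t^2)$ fed into the expansion of $f^{-1}$ at $f(x_0)$ gives exactly $x_0+A+\frac{f''(x_0)}{2f'(x_0)}(B-A^2)+o(s^2+t^2)$, and since $B-A^2=w_1w_2(s-t)^2$ the limit argument yields (ii). Two cosmetic remarks only: the specific choice $s=\varepsilon$, $t=-\varepsilon$ breaks down when $x_0$ is an endpoint of $I$ (the interval need not be open), but your own formula shows any $s\neq t$ of a single sign, e.g.\ $s=0$, $t=\varepsilon$, works verbatim, so no continuity argument is needed there; alternatively, the converse can be done without any Taylor computation by noting that the $n=2$ case of (i) says precisely that $h=g\circ f^{-1}$ is convex (respectively concave) according to $\sign(g')$, and then reading off the sign of $h''$ from your displayed identity--this is the more structural twin of your calculation, but both are equally rigorous.
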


In a special case $I\subseteq\R_+$ condition (ii) can be equivalently written as
\Eq{*}{
  \chi_f(x):=\frac{xf''(x)}{f'(x)}+1\leq\frac{xg''(x)}{g'(x)}+1=:\chi_g(x) \qquad(x\in I).
}
It is easy to verify that the equality $\chi_{\pi_p}\equiv p$ holds for 
all $p\in\R$. Therefore, in view of Proposition~\ref{Mik} and the definitions of the lower and upper homogeneous envelopes, we have
\Eq{*}{
\P_q=\QA{\pi_q} \le \big(\QA{f}\big)_\$ 
   \le \QA{f} \le \big(\QA{f}\big)^\$ \le \QA{\pi_p}=\P_p,
}
where $q:=\inf_I\chi_f$ and $p:=\sup_I\chi_f$, moreover 
these parameters are sharp. In other words, the operator $\chi_{(\cdot)}$ could be applied to embed 
quasiarithmetic means into the scale of power means (cf. \cite{Pas13}). 

This property can be used to obtain a homogenization of quasiarithmetic means.
\begin{thm}
 \label{thm:HomQA}
 Let $I$ be an interval with $\inf I=0$ and $f \colon I \to \R$ be a twice differentiable function with nowhere vanishing first derivative. Define $p,\,q \in [-\infty,+\infty]$ by 
  \Eq{*}{
  p:=\liminf_{t \to 0} \chi_f(t) \qquad\text{ and }\qquad q:=\limsup_{t \to 0} \chi_f(t).
  }
  Then $\P_p \le (\QA{f})_\#$ and $(\QA{f})^\# \le \P_q$. 
  
  In particular, if the limit $\lim_{t \to 0} \chi_f(t)$ exists then $(\QA{f})_\#=(\QA{f})^\#=\P_p=\P_q$.
\end{thm}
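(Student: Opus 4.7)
The plan is to apply Mikusiński's comparison result (Proposition~\ref{Mik}) on a small subinterval of $I$ near $0$, and then transport the resulting inequality through the scaling/limit operation that defines $(\QA{f})_\#$ and $(\QA{f})^\#$.

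First I would prove $\P_p \le (\QA{f})_\#$. The case $p = -\infty$ is immediate, since every weighted mean dominates $\P_{-\infty}$ via the elimination and mean value properties. For $p \in \R$, fix $\varepsilon > 0$; by the definition of $\liminf$ there exists $\delta > 0$ such that $\chi_f(s) \ge p - \varepsilon$ for every $s \in I \cap (0, \delta)$. Since $\chi_{\pi_{p-\varepsilon}} \equiv p - \varepsilon$, Proposition~\ref{Mik} applied on the subinterval $J := I \cap (0, \delta)$ yields
\Eq{*}{
  \QA{f}(y, \lambda) \ge \P_{p-\varepsilon}(y, \lambda) \qquad \text{for all } y \in J^n,\ \lambda \in W_n(\R).
}
Now fix $x \in \R_+^n$ and $\lambda \in W_n(\R)$, and choose $\tau > 0$ so that $\tau x \in J^n$; then $tx \in J^n$ for every $t \in (0, \tau]$, and the homogeneity of $\P_{p-\varepsilon}$ produces
\Eq{*}{
  \tfrac{1}{t} \QA{f}(tx, \lambda) \ge \tfrac{1}{t} \P_{p-\varepsilon}(tx, \lambda) = \P_{p-\varepsilon}(x, \lambda).
}
Taking $\liminf_{t \to 0^+}$ gives $(\QA{f})_\#(x, \lambda) \ge \P_{p-\varepsilon}(x, \lambda)$, and letting $\varepsilon \to 0^+$ together with the continuity of $r \mapsto \P_r(x, \lambda)$ closes this half. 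The case $p = +\infty$ runs on the same template, replacing $p - \varepsilon$ by an arbitrarily large $M \in \R$ and passing to the limit $M \to \infty$.

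The dual inequality $(\QA{f})^\# \le \P_q$ follows by a fully symmetric argument: one reverses the direction of Proposition~\ref{Mik} using $\chi_f \le q + \varepsilon$ on a neighborhood of $0$, and takes $\limsup_{t \to 0}$ at the end. Finally, if $\lim_{t \to 0} \chi_f(t)$ exists, then $p = q$, and chaining the two bounds with the trivial inequality $(\QA{f})_\# \le (\QA{f})^\#$ collapses the whole chain to $\P_p$.

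I expect no serious obstacle here; the only conceptually interesting point is the restriction to the subinterval $J$, which allows Proposition~\ref{Mik} to be invoked even though $\chi_f \ge p - \varepsilon$ need not hold globally on $I$. The homogeneity of the power means is exactly what makes the scaling step clean, since the factor $1/t$ is absorbed and the inequality survives the passage to the limit.
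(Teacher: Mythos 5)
Your proposal is correct and follows essentially the same route as the paper: localize the $\chi_f$ bound on a neighborhood $(0,\delta)$ of $0$, invoke Mikusiński's comparison there, use homogeneity of the power mean to absorb the factor $1/t$, pass to the $\liminf$/$\limsup$, and then let the auxiliary parameter ($p-\varepsilon$ in your version, $q_0>q$ in the paper's) tend to its limit, with the infinite cases handled separately. No gaps worth flagging.
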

\begin{proof}
Let $n\in\N$ and $(x,\lambda) \in \R_+^n\times W_n(R)$ be fixed.
We will prove that $(\QA{f})^\#(x,\lambda) \le \P_q(x,\lambda)$.
If $q=+\infty$ then this inequality is trivially satisfied. From now on suppose that $q \in [-\infty,\infty)$.

Fix $q_0>q$ arbitrarily (if $q=-\infty$ then $q_0\in \R$) and choose $\delta>0$ such that 
\Eq{*}{
\chi_f(t) \le q_0=\chi_{\pi_{q_0}}(t) \qquad \text{ for all }t \in (0,\delta).
}
By Proposition~\ref{Mik}, this inequality implies that $\QA{f}\leq \QA{\pi_{q_0}}=\P_{q_0}$ holds on the interval $(0,\delta)$. In particular, if $\tau>0$ satisfies $\tau x\in (0,\delta)^n$, then we have
 \Eq{*}{
 \tfrac{1}t \QA{f}(tx,\lambda) \le \P_{q_0}(x,\lambda) \qquad \text{ for all }t \in (0,\tau).
 }
Upon taking the limsup as $t\to0$, it follows that $(\QA{f})^\#(x,\lambda) \le \P_{q_0}(x,\lambda)$. As $q_0>q$ was taken arbitrarily, we obtain $(\QA{f})^\#(x,\lambda) \le \P_{q}(x,\lambda)$, which completes the proof of the inequality $(\QA{f})^\# \le \P_{q}$. Similarly, we can also obtain the inequality $\P_{p}\leq (\QA{f})_\#$.

Finally, to prove the last part of this statement, notice that in this particular case we have $p=q$.
\end{proof}

In the next statement, we establish a result which is valid without smoothness assumptions.

\begin{prop}
Let $I$ be an interval with $\inf I=0$ and let $f\colon\to\R$ be a strictly monotone continuous function. Then the following assertions are equivalent:
\begin{enumerate}[(i)]
\item There exits $p\in\R$ such that $(\QA{f})^\#=(\QA{f})_\#=\P_p$;
\item There exits $p\in\R$ such that, for all $x,y,z\in\R_+$ with $z\neq y$,
\Eq{*}{
 \lim_{t \to 0} \frac{f(tx)-f(ty)}{f(tz)-f(ty)}=\frac{\pi_p(x)-\pi_p(y)}{\pi_p(z)-\pi_p(y)};
}
\item The function 
\Eq{*}{
   \varphi \colon \R_+ \ni x \mapsto \lim_{t \to 0} \frac{f(tx)-f(t)}{f(2t)-f(t)}
}
is well-defined (i.e., the limit exists for all $x>0$), is continuous and strictly monotone.
\end{enumerate}
\end{prop}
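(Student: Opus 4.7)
The plan is to prove the cycle (i)$\Rightarrow$(ii)$\Rightarrow$(iii)$\Rightarrow$(i). The central object is the affinely normalized translate
\[
\varphi_t(x) := \frac{f(tx)-f(t)}{f(2t)-f(t)},
\]
which is continuous and strictly increasing on $\R_+$ (the sign of $f'$ is absorbed into the denominator), with $\varphi_t(1)=0$ and $\varphi_t(2)=1$. Because $\QA{f}$ depends only on the affine class of $f$, a direct substitution yields the key identity
\[
\QA{\varphi_t}(x,\lambda)=\tfrac{1}{t}\QA{f}(tx,\lambda),
\]
so condition (i) is equivalent to the pointwise convergence $\QA{\varphi_t}\to\P_p$ as $t\to 0$.

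For (i)$\Rightarrow$(ii) I would fix $y<z$ in $\R_+$ and consider the continuous strictly increasing bijection $\psi_t\colon[0,1]\to[y,z]$ defined by $\psi_t(s):=\QA{\varphi_t}(y,z;1-s,s)$. By (i) it converges pointwise to $\psi(s):=\P_p(y,z;1-s,s)$, also a continuous strictly increasing bijection onto $[y,z]$. Pointwise convergence of monotone functions to a continuous monotone function is uniform, so $\psi_t^{-1}\to\psi^{-1}$ pointwise on $[y,z]$. For $x\in[y,z]$ one computes $\psi_t^{-1}(x)=\tfrac{f(tx)-f(ty)}{f(tz)-f(ty)}$ and $\psi^{-1}(x)=\tfrac{\pi_p(x)-\pi_p(y)}{\pi_p(z)-\pi_p(y)}$, which is precisely the limit asserted in (ii); for $x\notin[y,z]$ one enlarges to an interval $[y_0,z_0]\supset\{x,y,z\}$, applies the same argument there, and rearranges algebraically. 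The implication (ii)$\Rightarrow$(iii) is immediate on taking $y=1$, $z=2$, since then $\varphi(x)=\tfrac{\pi_p(x)-\pi_p(1)}{\pi_p(2)-\pi_p(1)}$, which is continuous and strictly monotone.

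The \emph{main obstacle} is (iii)$\Rightarrow$(i), which proceeds via a functional equation. Writing $f(tsx)$ in two ways yields
\[
\varphi_t(sx) = \varphi_t(s) + [\varphi_t(2s)-\varphi_t(s)]\,\varphi_{ts}(x)\qquad(t,s,x>0),
\]
and letting $t\to 0$ (so also $ts\to 0$ and $\varphi_{ts}(x)\to\varphi(x)$ by the well-definedness hypothesis) produces
\[
\varphi(sx) = \varphi(s) + a(s)\varphi(x),\qquad a(s):=\varphi(2s)-\varphi(s).
\]
Exchanging $s$ and $x$ and using $\varphi(1)=0$ together with the strict monotonicity of $\varphi$ gives $a(s)=1+c\,\varphi(s)$ for some constant $c$, whence $\varphi(sx)=\varphi(s)+\varphi(x)+c\,\varphi(s)\varphi(x)$. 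If $c=0$, continuity forces $\varphi(x)=\log_2 x$, an affine transform of $\pi_0$; if $c\neq 0$, then $1+c\varphi$ is a continuous multiplicative function on $\R_+$, hence equals $x\mapsto x^p$ for some $p\neq 0$ with $c=2^p-1$, so $\varphi=\tfrac{\pi_p-1}{2^p-1}$. In both cases $\varphi$ is affinely equivalent to $\pi_p$ and $\QA{\varphi}=\P_p$. Finally, the pointwise convergence $\varphi_t\to\varphi$ with the common normalization $\varphi_t(1)=0$, $\varphi_t(2)=1$ upgrades to uniform convergence on compact subsets of $\R_+$; the inverses then converge uniformly on compacts, so $\QA{\varphi_t}(x,\lambda)\to\QA{\varphi}(x,\lambda)=\P_p(x,\lambda)$, which is (i). The delicate technical points are the legitimacy of the limit passage in the identity above (requiring $s>0$ to be fixed before $t\to 0$) and the monotone-to-continuous uniform convergence lemma invoked here and in (i)$\Rightarrow$(ii).
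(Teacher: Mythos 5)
Your proposal is correct, but it takes a genuinely different route from the paper's. Both arguments hinge on the same normalized family $\varphi_t(x)=\tfrac{f(tx)-f(t)}{f(2t)-f(t)}$ and the identity $\QA{\varphi_t}(x,\lambda)=\tfrac1t\QA{f}(tx,\lambda)$, but the paper then disposes of (i)$\Leftrightarrow$(ii) in one stroke by quoting the convergence criterion for quasiarithmetic means (\cite[Corollary~1]{Pal91}), and proves (iii)$\Rightarrow$(i) abstractly: pointwise convergence $\varphi_t\to\varphi$ yields $(\QA{f})_\#=(\QA{f})^\#=\QA{\varphi}$, and since a homogenization is homogeneous, $\QA{\varphi}$ must be a power mean by the classical characterization in \cite{HarLitPol34}. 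You avoid both citations: you prove (i)$\Rightarrow$(ii) by hand via the two-point means $\psi_t(s)=\QA{\varphi_t}\big((y,z),(1-s,s)\big)$ and convergence of their inverses (in effect reproving the needed direction of the cited convergence theorem in the two-variable case, with the enlargement trick correctly handling $x\notin[y,z]$), and you prove (iii)$\Rightarrow$(i) from the exact telescoping identity $\varphi_t(sx)=\varphi_t(s)+\big(\varphi_t(2s)-\varphi_t(s)\big)\varphi_{ts}(x)$, whose $t\to0$ limit is a Pexider-type equation that you solve (via the multiplicative Cauchy equation, using continuity and strict monotonicity of $\varphi$) to identify $\varphi$ explicitly as an affine image of $\pi_p$, after which $\QA{\varphi_t}\to\QA{\varphi}=\P_p$ follows from monotonicity. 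Your route buys self-containedness and an explicit formula for $\varphi$ and hence for $p$ (through $c=2^p-1$); the paper's route buys brevity, delegating both the convergence of means and the homogeneity characterization to known results. Two minor remarks: in your final step full local uniform convergence is not actually needed, since the sandwich $\varphi_t(m-\varepsilon)<w_t<\varphi_t(m+\varepsilon)$ for small $t$ already gives $\QA{\varphi_t}(x,\lambda)\to\QA{\varphi}(x,\lambda)$ pointwise; and the domain bookkeeping (that $\varphi_t$, $\varphi_{ts}$ are defined only where $tx,2t,2ts\in I$, which holds for all small $t$ once $s$, $x$ and the entries are fixed) deserves a sentence but causes no difficulty.
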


\begin{proof}
It is known \cite[Corollary~1]{Pal91} that if $\{f_t\mid t\in[0,1]\}$ is a family of continuous, strictly monotone functions on $I$, then
\Eq{*}{
\QA{f_t} \to \QA{f_0} \iff \lim_{t \to 0}\frac{f_t(x)-f_t(y)}{f_t(x)-f_t(z)} = \frac{f_0(x)-f_0(y)}{f_0(x)-f_0(z)} \text{ for all } x,\,y,\,z \in I \text{ with } x \ne z\,.
}
Defining
\Eq{*}{
  f_t(x):=f(tx) \qquad(t\in(0,1],\,x\in I), \qquad f_0(x)=\pi_p(x) \qquad(x\in I)
}
and applying the above statement, we can easily see that the assertions (i) and (ii) are equivalent. 

The substitutions $y=1$, $z=2$ in assertion (ii) imply that (iii) is valid.
To prove the implication (iii)$\Rightarrow$(i), first choose $\tau>0$ such that $2\tau\in I$, then let
\Eq{*}{
 \varphi_t(x):=\frac{f(tx)-f(t)}{f(2t)-f(t)}=a_t f_t(x)+b_t \qquad(t\in(0,\tau],\,x\in I).
}
Applying the well-known criteria for the equality of quasiarithmetic means, we obtain 
\Eq{*}{
\QA{\varphi_t}(x,\lambda)=\QA{f_t}(x,\lambda)
=\tfrac1t \QA{f}(t x,\lambda)
\text{ \ for all }n \in \N\text{ and }(x,\lambda) \in I^n \times W_n(\R).
}
Furthermore we know that $\varphi_t \to \varphi$ as $t \to 0$. Whence 
\Eq{*}{
\lim_{t \to 0} \tfrac1t \QA{f}(t x,\lambda)=\lim_{t \to 0} \QA{\varphi_t}(x,\lambda)=\QA{\varphi}(x,\lambda) \text{ \ for all }n \in \N\text{ and }(x,\lambda) \in I^n \times W_n(\R).
}
Thus $(\QA{f})_\#=(\QA{f})^\#=\QA{\varphi}$. However, obviously, homogenization of a mean is a homogeneous mean; therefore $\QA{\varphi}$ is a homogeneous quasiarithmetic mean, that is, $\QA{\varphi}=\P_p$ for some $p \in \R$ (cf.\ \cite{HarLitPol34}), which proves (i). 
 \end{proof}

\section{\label{sec:semidev}Semideviations and semideviation means}

In what follows, we recall the notion of a semideviation and the weighted semideviation means generated by them (cf. \cite{Pal89b}). 

Let us stress that in the mentioned paper \cite{Pal89b} this definition was restricted to two means ($\LLD$ and $\UUD$ in the definition below; it was called lower an upper semideviation, respectively). Furthermore it was presented in nonweighted setting only. As semideviations are not continuous in their weights, extensions theorem \cite[Theorems 2.2, 2.3]{PalPas18b} allows us to extend it to a $\Q$-weighted mean only. Its extension to $\R$-weighted mean was introduced as it should remain a natural generalization of quasideviation means.

In the present paper we are going to introduce two more semideviation means ($\LD$ and $\UD$) and extend this property to $\R$-weighted means.

\begin{defin}[Semideviations and weighted semideviation means]
A real valued function $E\colon I\times I\to\R$ is said to be a \emph{semideviation} if, for all distinct elements $x,y\in I$, the sign of $E(x,y)$ coincides with that of $x-y$.
For $n\in\N$, $(x,\lambda)\in I^n\times W_n(\R)$, consider the function $e:I\to\R$ defined by 
\Eq{e}{
  e(y):=\lambda_1 E(x_1,y)+\cdots+\lambda_n E(x_n,y).
}
In terms of the points where the function $e$ changes its sign, we can introduce four means as follows:
\Eq{*}{
\LLD_E(x,\lambda)&:= \inf\big\{y \in I \mid e(y) \le 0 \big\}, \quad
&\UUD_E(x,\lambda)&:= \sup\big\{y \in I \mid e(y) \ge 0 \big\},\\
\LD_E(x,\lambda)&:= \inf\big\{y \in I \mid e(y) < 0 \big\}, \quad
&\UD_E(x,\lambda)&:= \sup\big\{y \in I \mid e(y) > 0 \big\}.
}
We can easily see that
\Eq{*}{
\LLD_E(x,\lambda)&= \sup\big\{y \in I \colon e|_{(-\infty,y)\cap I}>0\big\}, \quad
&\UUD_E(x,\lambda)&= \inf\big\{y \in I \colon e|_{(y,\infty)\cap I}<0\big\},\\
\LD_E(x,\lambda)&= \sup\big\{y \in I \colon e|_{(-\infty,y)\cap I}\geq0\big\}, \quad
&\UD_E(x,\lambda)&= \inf\big\{y \in I \colon e|_{(y,\infty)\cap I}\leq0\big\}.
}
The two functions $\LLD_E,\UUD_E \colon \bigcup_{n=1}^{\infty} I^n \times W_n(\R) \to I$ will be called the \emph{lower semideviation} and \emph{upper semideviation mean} generated by $E$, respectively. 
\end{defin}

The following lemma is obvious but very useful.

\begin{lem} Let $E\colon I\times I\to\R$ be a semideviation and $n\in\N$, $x=(x_1\dots,x_n)\in I^n$, $\lambda=(\lambda_1\dots,\lambda_n)\in W_n(\R)$. Define the function $e\colon I\to\R$ by \eq{e}.
\begin{enumerate}[(i)]
\item If the function $e$ is nonincreasing, then 
\Eq{*}{
  \LLD_E(x,\lambda)=\UD_E(x,\lambda) 
  \qquad\mbox{and}\qquad \LD_E(x,\lambda)=\UUD_E(x,\lambda);
}
\item If the function $e$ is strictly decreasing, then 
\Eq{*}{
  \LLD_E(x,\lambda)=\LD_E(x,\lambda)=\UD_E(x,\lambda)=\UUD_E(x,\lambda).
}
\item If the function $e$ is continuous, then $\LLD_E(x,\lambda)$,\, $\UUD_E(x,\lambda)$,\, $\LD_E(x,\lambda)$, and $\UD_E(x,\lambda)$ are zeroes of the function $e$.
\end{enumerate}
\end{lem}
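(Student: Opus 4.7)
The plan is to work directly with the four set-theoretic descriptions
\begin{align*}
A &:= \{y \in I : e(y) > 0\}, & B &:= \{y \in I : e(y) \ge 0\}, \\
C &:= \{y \in I : e(y) \le 0\}, & D &:= \{y \in I : e(y) < 0\},
\end{align*}
so that $\UD_E(x,\lambda) = \sup A$, $\UUD_E(x,\lambda) = \sup B$, $\LLD_E(x,\lambda) = \inf C$, and $\LD_E(x,\lambda) = \inf D$, and the pairs $(A,C)$ and $(B,D)$ each partition $I$.

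For part (i), nonincreasingness of $e$ makes $A$ and $B$ downward-closed subsets of $I$: if $y \in A$ and $y' \in I$ with $y' < y$, then $e(y') \ge e(y) > 0$, so $y' \in A$, and analogously for $B$. Consequently $C$ and $D$ are upward-closed. I would then invoke the elementary fact that for any partition $I = L \sqcup U$ with $L$ downward-closed and $U$ upward-closed, one has $\sup L = \inf U$ (simply because any element of $L$ lies to the left of any element of $U$, and any point strictly below $\inf U$ in $I$ must lie in $L$). Applied to $(A,C)$ this yields $\UD_E = \LLD_E$, and applied to $(B,D)$ it yields $\UUD_E = \LD_E$.

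For part (ii), strict decrease of $e$ forces the set $B \setminus A = \{y \in I : e(y) = 0\}$ to contain at most one point $y_0$. If such a $y_0$ exists, then $A$ and $B$ differ only at $y_0$ (and similarly for $C$ and $D$), so all four extrema equal $y_0$. If $e$ has no zero in $I$, then strict monotonicity pins $e$ to a single sign throughout $I$, and all four extrema collapse to a common endpoint. Combined with (i), this produces the desired fourfold equality.

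For part (iii), I would first use the sign property of $E$ together with the fact that at least one $\lambda_i$ is positive to observe that $e(y) > 0$ whenever $y < \min\{x_i : \lambda_i > 0\}$ and $e(y) < 0$ whenever $y > \max\{x_i : \lambda_i > 0\}$. This places each of the four extrema strictly between these bounds, hence away from the endpoints of $I$. Continuity then makes $B$ and $C$ relatively closed, so $\sup B \in B$ and $\inf C \in C$; approaching these two points from the opposite side, where $e$ has the opposite strict sign, and passing to the limit forces $e(\sup B) = e(\inf C) = 0$. The same two-sided argument handles $\sup A$ and $\inf D$ through the relatively open sets $A$ and $D$. The main obstacle throughout is precisely this endpoint bookkeeping: the four extrema have to be confirmed to lie in the interior of $I$ before the \textquotedblleft limit from both sides\textquotedblright{} step is legitimate, and it is the sign condition on $E$ that supplies this interiority.
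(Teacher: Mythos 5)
Your part (ii) contains a step that is false as written. You assert that if $e$ is strictly decreasing and has no zero in $I$, then $e$ keeps a single sign throughout $I$, so that the four extrema ``collapse to a common endpoint.'' But $e$ is not assumed continuous in (ii) --- the possible discontinuity of $e$ is precisely why four means are introduced in the first place --- and a strictly decreasing function can jump across $0$ without vanishing: take $e$ positive and decreasing up to some interior point $y_0$ and strictly negative from $y_0$ on. Then $\{e>0\}$ and $\{e<0\}$ are both nonempty, $e$ changes sign with no zero, and all four values equal the interior jump point $y_0$, not an endpoint. The conclusion of (ii) survives, and the repair needs nothing beyond your own part (i): if $e$ has no zero, then $\{y\in I\mid e(y)\ge0\}=\{y\in I\mid e(y)>0\}$ and $\{y\in I\mid e(y)\le0\}=\{y\in I\mid e(y)<0\}$, hence $\UUD_E(x,\lambda)=\UD_E(x,\lambda)$ and $\LLD_E(x,\lambda)=\LD_E(x,\lambda)$, and since a strictly decreasing $e$ is in particular nonincreasing, part (i) supplies $\LLD_E(x,\lambda)=\UD_E(x,\lambda)$, giving the fourfold equality. (Your treatment of the case where a zero $y_0$ exists is fine.)

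The paper itself states the lemma without proof, so the rest of your argument is judged on its own, and parts (i) and (iii) are essentially correct: down-/up-closedness of the level sets plus the partition fact handles (i), and relative closedness/openness of the level sets plus two-sided limits handles (iii). Two small points of bookkeeping deserve care. In (iii), the four values need not lie \emph{strictly} between your bounds nor in the interior of $I$: they lie in $[m,M]$ with $m:=\min\{x_i\mid\lambda_i>0\}$ and $M:=\max\{x_i\mid\lambda_i>0\}$, and if $I$ contains an endpoint at which all positive-weight entries sit, an extremum can be that endpoint; there the missing one-sided limit is replaced by evaluating $e$ directly at that point via the sign property of $E$, so the argument still closes. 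Finally, both in (i) and in your partition fact one tacitly uses the conventions for $\sup$ and $\inf$ of an empty level set (e.g.\ when all positive-weight entries equal an attained endpoint of $I$); the paper glosses over this as well, but a sentence acknowledging it would make the proof airtight.
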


The next statement provides us the most basic inequalities among the four semideviation means.

\begin{prop}\label{prop:SDM}
Let $E\colon I\times I\to\R$ be a semideviation. Then, the means $\LLD_E$,\, $\UUD_E$,\, $\LD_E$, and $\UD_E$ are symmetric $\R$-weighted means on $I$. In addition, for all $n\in\N$, $(x,\lambda)\in I^n\times W_n(\R)$,
\Eq{SDM}{
  \min(x_1,\dots,x_n)\leq \LLD_E(x,\lambda)\leq \UUD_E(x,\lambda)\leq \max(x_1,\dots,x_n)
}
and
\Eq{*}{
\LD_E(x,\lambda), \UD_E(x,\lambda) \in \big[\LLD_E(x,\lambda),\,\UUD_E(x,\lambda)\big].
}
\end{prop}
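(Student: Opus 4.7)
The plan is to work entirely through the sign behavior of the auxiliary function $e$ defined in \eq{e}, since all four means are defined as sign-change thresholds of this same function. Every claim will then reduce to elementary manipulations of the corresponding superlevel and sublevel sets of $e$.

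First I will verify the four axioms (i)--(iv) together with symmetry. For nullhomogeneity in the weights, replacing $\lambda$ by $t\lambda$ with $t>0$ multiplies $e$ by $t$, leaving each of the four sets $\{y\mid e(y)\geq 0\}$, $\{y\mid e(y)\leq 0\}$, $\{y\mid e(y)>0\}$, $\{y\mid e(y)<0\}$ unchanged. The reduction principle follows because the function $e$ associated with $(x,\lambda+\mu)$ coincides literally with the one associated with $(x\odot x,\lambda\odot\mu)$. The elimination principle holds because a term with $\lambda_j=0$ contributes nothing to $e$. Symmetry is clear from the fact that $e(y)$ is a symmetric function of the pairs $(x_i,\lambda_i)$.

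Next I will establish the chain \eq{SDM}. Writing $m:=\min_i x_i$ and $M:=\max_i x_i$, for $y<m$ every difference $x_i-y$ is positive, so by the sign axiom of $E$ every summand $\lambda_i E(x_i,y)$ is nonnegative, and since at least one weight is strictly positive (by the definition of $W_n(\R)$), the sum satisfies $e(y)>0$. Thus no such $y$ lies in $\{y\mid e(y)\leq 0\}$, yielding $\LLD_E(x,\lambda)\geq m$; symmetrically, $e(y)<0$ for $y>M$ yields $\UUD_E(x,\lambda)\leq M$. For the middle inequality $\LLD_E\leq\UUD_E$, I argue by contradiction: if $\LLD_E>\UUD_E$, then any $y_0$ strictly between them would satisfy both $e(y_0)>0$ (being below $\inf\{y\mid e(y)\leq 0\}$) and $e(y_0)<0$ (being above $\sup\{y\mid e(y)\geq 0\}$), which is impossible.

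Finally, the containment $\LD_E(x,\lambda),\UD_E(x,\lambda)\in[\LLD_E(x,\lambda),\UUD_E(x,\lambda)]$ splits into four inequalities. Two follow immediately from set inclusions: $\{y\mid e(y)<0\}\subseteq\{y\mid e(y)\leq 0\}$ gives $\LD_E\geq\LLD_E$, and $\{y\mid e(y)>0\}\subseteq\{y\mid e(y)\geq 0\}$ gives $\UD_E\leq\UUD_E$. The remaining two, $\LD_E\leq\UUD_E$ and $\UD_E\geq\LLD_E$, follow by the same contradiction trick as for the middle inequality, and the mean value property for $\LD_E,\UD_E$ is then inherited from this enclosure. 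The whole argument is essentially bookkeeping; no single step is a serious obstacle, though one must be mildly careful in the case of partially vanishing weights when proving $e(y)>0$ for $y<m$, which is handled by the combination of the defining property of $W_n(\R)$ (at least one positive weight) and the strict sign axiom of $E$.
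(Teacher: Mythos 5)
Your proposal is correct and follows essentially the same route as the paper: all claims are reduced to the sign behaviour of the auxiliary function $e$, with the axioms and symmetry read off from the invariance (or positive scaling) of $e$, the outer bounds in \eq{SDM} from the sign of $e$ below $\min(x)$ and above $\max(x)$, and the inequalities $\LLD_E\leq\UUD_E$, $\LD_E\leq\UUD_E$, $\UD_E\geq\LLD_E$ by the same ``a point strictly in between would force contradictory signs of $e$'' argument, the remaining two following from trivial set inclusions exactly as in the paper's definition-based observation. No gap to report.
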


\begin{proof}
First observe that the reduction and the elimination principles as well as the symmetry are trivial as these transformations do not affect the function $e$. To prove the nullhomogeneity in the weights, observe that formula for $e$ is  linear with repsect to $\lambda$. Thus, multiplying $\lambda$ by a positive constant, preserves the sign of $e$ at every point.  

Let $n\in\N$, $(x,\lambda)\in I^n\times W_n(\R)$. 
By the semideviation property of $E$, for $y<\min(x_1,\dots,x_n)$, we have $e(y)>0$, therefore, the inequality 
$\min(x_1,\dots,x_n)\leq \LLD_E(x,\lambda)$ is always valid. On the other hand, if $\max(x_1,\dots,x_n)<y$, then $e(y)<0$. Therefore, 
$\UUD_E(x,\lambda)\leq \max(x_1,\dots,x_n)$.

If the second inequality in \eq{SDM} were not true, i.e., if $\UUD_E(x,\lambda)<\LLD_E(x,\lambda)$. Then, for any element $y$ between these values, we would have that $e(y)<0<e(y)$, which would be an obvious contradiction.

To prove the last assertion, observe that $e(z)>0$ for all $z \le \LLD_E(x,\lambda)$. Then, by the definition $\LD_E(x,\lambda) \ge \LLD_E(x,\lambda)$ and $\UD_E(x,\lambda) \ge \LLD_E(x,\lambda)$ (as $e(z) \le 0$ for no $z \le \LLD_E(x,\lambda)$). The proof of the two remaining inequalities is completely analogous.
\end{proof}

\begin{defin}[Quasideviations and weighted quasideviation means]
A semideviation $E\colon I\times I\to\R$ is said to be \emph{quasideviation} if, 
\begin{enumerate}[(a)]
 \item for all $x\in I$, the map $y\mapsto E(x,y)$ is continuous and,
 \item for all $x<y$ in $I$, the mapping $(x,y)\ni t\mapsto \frac{E(y,t)}{E(x,t)}$ is strictly increasing.
\end{enumerate}
By the results of the paper \cite{Pal82a}, it follows that lower semideviation and upper semideviation means generated by a quasideviation $E$ coincide. Moreover, for $n\in\N$, $(x,\lambda)\in I^n\times W_n(\R)$, the value $y=\D_E(x,\lambda):=\LLD_E(x,\lambda)=\UUD_E(x,\lambda)$ is the unique solution of the equation $e(y)=0$, where the function $e$ is defined by \eq{e}. The mean $\D_E$ defined this way, will be called the \emph{quasideviation mean} generated by $E$.
\end{defin}

\begin{exa}Now we provide some examples for the notions introduced above.
\begin{enumerate}[(1)]
 \item The function $S(x,y):=\sign(x-y)$ is a semideviation on $\R$. The lower and upper semideviation means generated by this semideviation define weighted lower and upper \emph{median} on $\R$.
 \item The function $A(x,y):=x-y$ is a quasideviation on $\R$, which generates the weighted arithmetic mean $\A=\P_1$.
 \item For a strictly increasing function $f\colon I\to\R$, let $f^{-1}\colon J\to I$ be the uniquely determined nondecreasing extension of the inverse function of $f$ to the smallest interval $J$ containing $f(I)$. Then the function $A_f$ defined by $A_f(x,y):=f(x)-f(y)$ is a semideviation on $I$. Furthermore, it can be proved that the lower and upper $A_f$-semideviation means coincide and, for $n\in\N$, $(x,\lambda)\in I^n\times W_n(\R)$, these means are given by formula \eq{QA}. Due to the continuity of $f^{-1}$, it follows that this weighted mean is continuous in the weights, monotone, but it may not be continuous in its entries. 
 \item Let $f\colon\R_+\to\R$ be any function such that $\sign(f(x))=\sign(x-1)$ for all $0<x\neq1$. Then $E_f(x,y):=f\big(\frac{x}{y}\big)$ is a semideviation on $\R_+$, furthermore, it is easy to see that the four weighted semideviation means 
 \Eq{Ef}{
   \LLE_f:=\LLD_{E_f}, \qquad \LE_f:=\LD_{E_f} \qquad
   \mbox{and}\qquad \UE_f:=\UD_{E_f}, \qquad \UUE_f:=\UUD_{E_f}
}
are homogeneous. If $f$ is continuous and strictly increasing, then it can be proved that $E_f$ is a quasideviation on $\R_+$, the above four weighted semideviation means coincide and are equal to the quasideviation mean $\E_f$ which is monotone and continuous.
\end{enumerate}
\end{exa} 

\section{Means generated by normalizable semideviations}

In this section we prove that if $E$ satisfies some additional assumptions, we can establish several counterparts of the well-known results considering deviation and quasideviation means. This notion is motivated by already introduced definition of regular quasideviation in \cite{Pal88a}. 

 We say that a semideviation $E\colon I\times I\to\R$ is \emph{normalizable} if
 \begin{enumerate}[(a)]
  \item for all $x\in I$, the function $I\ni y\mapsto E(x,y)$ is continuous,
  \item for all $x\in I$, the function $I\ni y\mapsto E(x,y)$ is differentiable at $x$, and
  \item the mapping $x\mapsto\partial_2E(x,x)$ is strictly negative and continuous on $I$.
 \end{enumerate}
For a normalizable semideviation $E$, we define its \emph{normalization} $E^*\colon I\times I\to\R$ of $E$ by
\Eq{*}{
  E^*(x,y):=\frac{E(x,y)}{-\partial_2E(y,y)} \qquad(x,y\in I).
}
It is obvious that the corresponding semideviation means generated by $E$ and $E^*$ are identical. The next statement shows that, for a normalized  semideviation $E$, the partial derivative $\partial_2E$ is identically equal to $-1$ on the diagonal of $I\times I$.

\begin{lem}
Let $E\colon I\times I\to\R$ be a normalizable semideviation. Then $E^*$ is also a normalizable semideviation and $\partial_2E^*(x,x)=-1$ for all $x\in I$, and hence $(E^*)^*=E^*$ holds, too.
\end{lem}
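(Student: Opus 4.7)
The plan is to verify the three defining conditions of normalizability for $E^*$ and then evaluate $\partial_2E^*(x,x)$ explicitly. The starting observation, which I would establish first, is that $E(x,x)=0$ for every $x\in I$: by condition (a), $y\mapsto E(x,y)$ is continuous, while the sign condition on the semideviation forces $E(x,y)\ge 0$ for $y<x$ and $E(x,y)\le 0$ for $y>x$; hence the two one-sided limits at $y=x$ must both equal $0$. This fact is the workhorse of the entire argument.

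From here the semideviation property of $E^*$ follows immediately: for $x\ne y$, condition (c) for $E$ gives $-\partial_2E(y,y)>0$, so $\sign(E^*(x,y))=\sign(E(x,y))=\sign(x-y)$. Continuity of $y\mapsto E^*(x,y)$ (condition (a) for $E^*$) is also clear, since the numerator is continuous by (a) for $E$ and the denominator is continuous and nonvanishing by (c) for $E$.

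The crucial step is computing $\partial_2E^*(x,x)$. Since $E^*(x,x)=E(x,x)/(-\partial_2E(x,x))=0$, the difference quotient factors as
\[
\frac{E^*(x,y)-E^*(x,x)}{y-x}=\frac{E(x,y)-E(x,x)}{y-x}\cdot\frac{1}{-\partial_2E(y,y)}.
\]
Letting $y\to x$, the first factor tends to $\partial_2E(x,x)$ by differentiability on the diagonal (condition (b) for $E$), while the second factor tends to $1/(-\partial_2E(x,x))$ by the continuity assumption (c) for $E$. Multiplying these limits gives $\partial_2E^*(x,x)=-1$, which simultaneously yields condition (b) for $E^*$ and, since the constant $-1$ is trivially strictly negative and continuous, condition (c) for $E^*$ as well.

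Finally, $(E^*)^*=E^*$ drops out by substituting the freshly established identity $-\partial_2E^*(y,y)=1$ into the definition of the normalization. I expect the only delicate point to be the factorization of the difference quotient displayed above; it relies entirely on the vanishing of $E$ on the diagonal, and once that is in place the rest of the argument is a straightforward limit computation together with bookkeeping of the defining conditions.
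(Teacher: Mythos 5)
Your proposal is correct and follows essentially the same route as the paper: establish $E(x,x)=0$ from continuity plus the sign condition, factor the difference quotient of $E^*$ at the diagonal as the product of the difference quotient of $E$ and $1/(-\partial_2E(y,y))$, and pass to the limit using conditions (b) and (c) to get $\partial_2E^*(x,x)=-1$, from which $(E^*)^*=E^*$ is immediate. Your additional explicit verification of the sign and continuity conditions for $E^*$ is fine and merely spells out what the paper leaves implicit.
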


\begin{proof} Let $x\in I$ be fixed. The function $E(x,\cdot)$ is differentiable at $x$, hence it is also continuous here, which together with the semideviation property implies $E(x,x)=0$. Then
\Eq{*}{
  \partial_2E^*(x,x)
  &=\lim_{y\to x}\frac{E^*(x,y)-E^*(x,x)}{y-x}
  =\lim_{y\to x}\frac{E^*(x,y)}{y-x}
  =\lim_{y\to x}\frac{E(x,y)-E(x,x)}{-\partial_2E(y,y)(y-x)}\\
  &=\lim_{y\to x}\frac{E(x,y)-E(x,x)}{y-x}\lim_{y\to x}\frac{1}{-\partial_2E(y,y)} 
  =\partial_2E(x,x)\cdot\frac{1}{-\partial_2E(x,x)}=-1.
}
Now, the identity $(E^*)^*=E^*$ follows immediately.
\end{proof}

The comparison of weighted lower and upper semideviation means can be characterized by the following theorem which is an extension of the analogous results obtained by Daróczy \cite{Dar71b,Dar72b} and by Páles \cite{Pal88a}.
The key to prove this theorem is the following auxiliary result.

\begin{lem}\label{Lim}
Let $E\colon I\times I\to\R$ be a normalizable semideviation. Then, for all $x,y\in I$,
\Eq{Lim1}{
  \lim_{n\to\infty}n\Big(\LLD_E\big((x,y),(1,n)\big)-y\Big)
  =\lim_{n\to\infty}n\Big(\UUD_E\big((x,y),(1,n)\big)-y\Big)
  =E^*(x,y).
}
\end{lem}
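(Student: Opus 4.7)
The plan is to fix $x, y \in I$ and $\varepsilon > 0$, set $\sigma^* := E^*(x,y)$ and $e_n(t) := E(x,t) + nE(y,t)$, and to show that for every sufficiently large $n$,
\Eq{*}{t_n^- \;\le\; \LLD_E\big((x,y),(1,n)\big) \;\le\; \UUD_E\big((x,y),(1,n)\big) \;\le\; t_n^+, \qquad t_n^\pm := y + \tfrac{\sigma^* \pm \varepsilon}{n}.}
Since $\varepsilon$ is arbitrary, both quantities in \eq{Lim1} will then equal $\sigma^*$. The crucial algebraic identity to exploit is $E(x,y) = -\partial_2 E(y,y)\,\sigma^*$, coming from $E(y,y) = 0$ (forced by continuity of $E(y,\cdot)$ at $y$ and the semideviation property) and the definition of $E^*$.

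First I would establish the outer bounds $\LLD_E \le t_n^+$ and $\UUD_E \ge t_n^-$. Using the differentiability of $E(y,\cdot)$ at $y$ and the continuity of $E(x,\cdot)$ there, a direct Taylor expansion gives
\Eq{*}{e_n(t_n^\pm) = E(x,y) + \partial_2 E(y,y)(\sigma^* \pm \varepsilon) + o(1) = \pm\,\partial_2 E(y,y)\,\varepsilon + o(1).}
Since $\partial_2 E(y,y) < 0$, this yields $e_n(t_n^-) > 0$ and $e_n(t_n^+) < 0$ for large $n$, and the outer bounds follow directly from the $\inf$/$\sup$ definitions of the two semideviation means.

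The matching inner bounds $\LLD_E \ge t_n^-$ and $\UUD_E \le t_n^+$ are the harder step. Since Proposition~\ref{prop:SDM} locates both means in the compact subinterval $K := [\min(x,y), \max(x,y)]$, it suffices to show that for large $n$, $e_n > 0$ on $K \cap [\min(x,y), t_n^-)$ and $e_n < 0$ on $K \cap (t_n^+, \max(x,y)]$. I would fix a small $\delta_0 > 0$ via the differentiability at $y$ so that $|E(y, y+h)| \ge \tfrac12|\partial_2 E(y,y)|\,|h|$ with the correct sign for $|h| < \delta_0$, and a large constant $A > 0$ exceeding both $|\sigma^*| + \varepsilon$ and $2M/|\partial_2 E(y,y)|$, where $M := \sup_{t \in K}|E(x,t)|$. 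Then $K$ decomposes into three overlapping regions. On the \emph{macroscale} $|t - y| \ge \delta_0$ within $K$, $E(y,\cdot)$ is bounded away from zero by continuity, compactness and the semideviation property, so $nE(y,t)$ dominates $E(x,t)$ and forces the correct sign. On the \emph{mesoscale} $A/n \le |t - y| \le \delta_0$, the differentiability bound gives $n|E(y,t)| \ge \tfrac{A}{2}|\partial_2 E(y,y)| > M$, again forcing the correct sign. On the \emph{microscale} $|t - y| \le A/n$, parametrizing $t = y + s/n$ with $s \in [-A, A]$ and combining the continuity of $E(x,\cdot)$ on a neighbourhood of $y$ with the differentiability of $E(y,\cdot)$ at $y$, one obtains the uniform limit $e_n(y + s/n) \to \partial_2 E(y,y)(s - \sigma^*)$, whose sign agrees with that of $\sigma^* - s$; this delivers $e_n > 0$ on $s \in [-A, \sigma^* - \varepsilon]$ and $e_n < 0$ on $s \in [\sigma^* + \varepsilon, A]$ for large $n$.

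The main obstacle will be coordinating these three scales: the constants $A$ and $\delta_0$ must be chosen $n$-independently so that the mesoscale and macroscale estimates hold simultaneously, while $A$ must also be large enough to contain $t_n^\pm$ within the microscale interval. Once these choices are in place, the three regions tile $K$ for every sufficiently large $n$, and the microscale uniform convergence glues cleanly to the mesoscale estimate at the interface $|t - y| = A/n$.
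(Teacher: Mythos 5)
Your proposal is correct, but it follows a genuinely different route from the paper. The paper's proof is a two-step argument: it first shows qualitatively that $u_n:=\LLD_E\big((x,y),(1,n)\big)$ and $v_n:=\UUD_E\big((x,y),(1,n)\big)$ converge to $y$ (via the finite supremum $K_\varepsilon$ of $-E(x,t)/E(y,t)$ on $(-\infty,y-\varepsilon]\cap I$), then uses the continuity of $e_n$ to conclude that $u_n,v_n$ are exact zeroes of $e_n$ and rewrites the equation $E(x,u_n)+nE(y,u_n)=0$ as $n(u_n-y)=-E(x,u_n)\,\frac{u_n-y}{E(y,u_n)-E(y,y)}$, which converges to $E^*(x,y)$ by continuity of $E(x,\cdot)$ and the definition of $\partial_2E(y,y)$. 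You never invoke the zero characterization at all: you work directly from the $\inf$/$\sup$ definitions using only sign information about $e_n$, and you prove the quantitative sandwich $y+\frac{\sigma^*-\varepsilon}{n}\le\LLD_E\le\UUD_E\le y+\frac{\sigma^*+\varepsilon}{n}$ via your macro/meso/micro decomposition of $[\min(x,y),\max(x,y)]$. This is longer and requires the careful $n$-independent coordination of $A$ and $\delta_0$ that you flag, but it is more elementary in that it bypasses the continuity-of-$e_n$ zero argument and yields explicit two-sided rate bounds; the paper's version is shorter because the exact equation does the quantitative work in one line. Two small housekeeping points you should add: treat the trivial case $x=y$ separately (as the paper does), and take $\varepsilon<|E^*(x,y)|$ so that $t_n^{\pm}$ lie strictly between $y$ and $x$ for large $n$ (this also keeps all evaluation points inside $I$ even when $y$ is an endpoint of the interval); with that restriction your microscale uniform limit should be stated only for those $s$ with $y+s/n\in I$, which covers every point you actually use.
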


\begin{proof} If $x=y$, then there is nothing to prove. Assume that $x<y$ and set
\Eq{*}{
  u_n:=\LLD_E\big((x,y),(1,n)\big),\qquad v_n:=\UUD_E\big((x,y),(1,n)\big)\qquad(n\in\N).
}
First we are going to show that both sequences $(u_n)$ and $(v_n)$ converge to $y$. Let $0<\varepsilon<y-x$ be arbitrary. It is enough to show that, for large $n$, the inequality $y-\varepsilon\leq u_n$ holds, because we also have $u_n\leq v_n\leq y$.

The function 
\Eq{*}{
  t\mapsto -\frac{E(x,t)}{E(y,t)}
}
is continuous on $(-\infty,y-\varepsilon]\cap I$, negative for $t<x$ and nonnegative for $t\in[x,y-\varepsilon]$. Thus, due to its continuity, 
\Eq{*}{
  K_\varepsilon:=\sup_{t\in(-\infty,y-\varepsilon]\cap I}-\frac{E(x,t)}{E(y,t)}
}
is finite. Therefore, for $n>K_\varepsilon$ and $t\in(-\infty,y-\varepsilon]$, we have
\Eq{en}{
 e_n(t):=E(x,t)+nE(y,t)>0.
}
This implies that, for $n>K_\varepsilon$, 
\Eq{*}{
   y-\varepsilon\leq u_n,
}
and hence we get that $(u_n)$ (together with $(v_n)$) converges to $y$. (The proof in the case $y<x$ is completely analogous.) 

Applying the continuity of the function $e_n$ defined in \eq{en}, it follows that $u_n$ and $v_n$ are zeroes of the function $e_n$, that is,
\Eq{en+}{
  E(x,u_n)+nE(y,u_n)=0 \qquad\mbox{and}\qquad E(x,v_n)+nE(y,v_n)=0.
}
It is obvious that $u_n\neq y\neq v_n$. 
Then the first equality in \eq{en+} yields
\Eq{*}{
  n(u_n-y)=-E(x,u_n)\frac{u_n-y}{E(y,u_n)-E(y,y)}.
}
Using the continuity of $E$ in the second variable, the regularity assumption and that $(u_n)$ tends to $y$, it follows that the right hand side of the above equality tends to $E^*(x,y)$ as $n\to\infty$. Therefore, $n(u_n-y)$ converges to the same limit. Analogously, starting from the second equality in \eq{en+}, we can obtain that $n(v_n-y)$ also converges to $E^*(x,y)$. Thus the proof is complete.
\end{proof}

\begin{thm}\label{com}
Let $E,F\colon I\times I\to\R$ be normalizable semideviations. If $E^*\leq F^*$, then
\Eq{EF}{
  \LLD_E\leq \LLD_F, \qquad\LD_E\leq \LD_F \qquad\mbox{and}\qquad 
  \UD_E\leq \UD_F,\qquad \UUD_E\leq \UUD_F.
}
Conversely, if
\Eq{EF+}{
  \LLD_E\leq\UUD_F,
}
then $E^*\leq F^*$. Therefore, the inequality $E^*\leq F^*$ is also necessary for any the of the inequalities in \eq{EF}.
\end{thm}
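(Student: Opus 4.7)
The plan is to handle the two implications separately: the forward one reduces to a routine comparison of sign-sets, while the reverse one follows from Lemma~\ref{Lim} applied to two-point data. Throughout, I would freely replace $E$ and $F$ by their normalizations $E^*$ and $F^*$, since these generate the same four semideviation means. Under the hypothesis $E^*\le F^*$, this reduction means we may assume $E\le F$ pointwise on $I\times I$.

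For the forward implication, fix $n\in\N$ and $(x,\lambda)\in I^n\times W_n(\R)$, and set
\Eq{*}{
  e_E(z):=\sum_{i=1}^n\lambda_i E(x_i,z),\qquad e_F(z):=\sum_{i=1}^n\lambda_i F(x_i,z).
}
Since the weights are nonnegative, $E\le F$ gives $e_E\le e_F$ on $I$, whence
\Eq{*}{
  \{z\in I:e_F(z)\le 0\}\subseteq\{z\in I:e_E(z)\le 0\},
}
and analogous inclusions hold for the sign conditions $<0$, $>0$, $\ge 0$. Reading off the $\inf/\sup$ definitions of $\LLD$, $\LD$, $\UD$, $\UUD$ then yields the four inequalities in \eq{EF} directly.

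For the reverse implication, fix $x,y\in I$ arbitrarily. For every $n\in\N$, applying \eq{EF+} to the two-point data $((x,y),(1,n))\in I^2\times W_2(\R)$ and then subtracting $y$ and multiplying by $n$ gives
\Eq{*}{
  n\Big(\LLD_E\big((x,y),(1,n)\big)-y\Big)\le n\Big(\UUD_F\big((x,y),(1,n)\big)-y\Big).
}
By Lemma~\ref{Lim}, the left-hand side converges to $E^*(x,y)$ and the right-hand side to $F^*(x,y)$ as $n\to\infty$, yielding $E^*(x,y)\le F^*(x,y)$.

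Finally, for the last assertion of the theorem, I would invoke Proposition~\ref{prop:SDM}: each of the four inequalities in \eq{EF} implies \eq{EF+} (for instance, $\UUD_E\le\UUD_F$ combined with $\LLD_E\le\UUD_E$ yields $\LLD_E\le\UUD_F$), and hence $E^*\le F^*$ by the reverse implication just proved. The only serious input is Lemma~\ref{Lim}; past that, the argument is careful bookkeeping with sign-sets and a single passage to the limit, so no real obstacle remains.
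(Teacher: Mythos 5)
Your proposal is correct and follows essentially the same route as the paper: the forward implication by comparing the sign behaviour of $e_{E^*}$ and $e_{F^*}$ (the paper phrases it as ``every $y<\LLD_E(x,\lambda)$ satisfies $e_{F^*}(y)\ge e_{E^*}(y)>0$''), the converse by applying Lemma~\ref{Lim} to the data $\big((x,y),(1,n)\big)$ and letting $n\to\infty$, and the final assertion via Proposition~\ref{prop:SDM}. Only note that for the sign conditions $>0$ and $\ge 0$ the set inclusions go the opposite way ($\{e_{E^*}>0\}\subseteq\{e_{F^*}>0\}$, etc.), which is exactly what the $\sup$-defined means $\UD$, $\UUD$ require, so your ``analogous inclusions'' phrase should be read in that sense.
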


\begin{proof} 
Let $n\in\N$, $x\in I^n$ and $\lambda\in W_n(\R)$. Then, for all $y<\LLD_E(x,\lambda)$, we have that
\Eq{*}{
  \lambda_1 E^*(x_1,y)+\cdots+\lambda_n E^*(x_n,y)>0.
}
Therefore, the inequalities $E^*(x_i,y)\leq F^*(x_i,y)$ imply that 
\Eq{*}{
  \lambda_1 F^*(x_1,y)+\cdots+\lambda_n F^*(x_n,y)>0,
}
for all $y<\LLD_E(x,\lambda)$. Therefore, the inequality 
$\LLD_E(x,\lambda)\leq\LLD_F(x,\lambda)$ follows. The proof of the other inequality in \eq{EF} is completely analogous.

Assume that $\LLD_E\leq \UUD_F$ holds on $\bigcup_{n=1}^{\infty} I^n \times W_n(\R)$. Let $x,y\in I$ be fixed. Then, for all $n\in\N$, we have
\Eq{*}{
  n\Big(\LLD_E\big((x,y),(1,n)\big)-y\Big)
  \leq n\Big(\UUD_F\big((x,y),(1,n)\big)-y\Big).
}
According to Lemma \ref{Lim}, the left and the right hand sides of this inequality converge to $E^*(x,y)$ and $F^*(x,y)$, respectively. Hence $E^*\leq F^*$ holds on $I\times I$.
\end{proof}

A surprising consequence of the above comparison theorem is that, under its assumptions, the inequality in \eq{EF+} implies all the inequalities in \eq{EF}. On the other hand, due to Proposition \ref{prop:SDM}, each of the inequalities in \eq{EF} implies \eq{EF+}, hence all these inequalities are pairwise equivalent.

In what follows, we characterize the Jensen concavity (convexity) of semideviation means.

\begin{thm}\label{JC}
Assume that $E\colon I\times I\to\R$ is a normalizable semideviation. Then the following statements are equivalent:
\begin{enumerate}[(i)]
 \item Any of the means $\LLD_E$, $\LD_E$, $\UD_E$, and $\UUD_E$ is Jensen concave; 
 \item For all $n\in\N$, $x,y\in I^n$ and $\lambda\in W_n(\R)$,
 \Eq{*}{
   \UUD_E\Big(\frac{x+y}{2},\lambda\Big)
   \geq\frac{\LLD_E(x,\lambda)+\LLD_E(y,\lambda)}{2};
 }
 \item $E^*$ is Jensen concave on $I\times I$;
 \item $E^*$ is a quasideviation (hence so is $E$) and $\D_E$ is Jensen concave;
 \item All of the means $\LLD_E$, $\LD_E$, $\UD_E$, and $\UUD_E$ are Jensen concave.
\end{enumerate}
\end{thm}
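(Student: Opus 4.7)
The plan is the cyclic chain of implications
\Eq{*}{
(v) \Rightarrow (i) \Rightarrow (ii) \Rightarrow (iii) \Rightarrow (iv) \Rightarrow (v).
}
The step $(v) \Rightarrow (i)$ is trivial. For $(i) \Rightarrow (ii)$, let $\M$ denote the Jensen-concave representative granted by $(i)$; invoking the sandwich $\LLD_E \le \M \le \UUD_E$ from Proposition~\ref{prop:SDM} yields
\Eq{*}{
\UUD_E\Big(\tfrac{x+y}{2}, \lambda\Big) \ge \M\Big(\tfrac{x+y}{2}, \lambda\Big) \ge \tfrac{\M(x,\lambda) + \M(y,\lambda)}{2} \ge \tfrac{\LLD_E(x,\lambda) + \LLD_E(y,\lambda)}{2}.
}

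For $(ii) \Rightarrow (iii)$, the idea is to extract $E^*$ pointwise by testing $(ii)$ on two-entry vectors with heavily unbalanced weights and invoking Lemma~\ref{Lim}. Given $(a_1, b_1), (a_2, b_2) \in I \times I$, apply $(ii)$ with the vectors $(a_1, b_1)$, $(a_2, b_2)$ and weight $\lambda = (1, n)$; subtract $\tfrac{b_1 + b_2}{2}$ from both sides, multiply by $n$, and let $n \to \infty$. By Lemma~\ref{Lim} the left-hand side tends to $E^*\big(\tfrac{a_1+a_2}{2}, \tfrac{b_1+b_2}{2}\big)$ and the right-hand side to $\tfrac{E^*(a_1, b_1) + E^*(a_2, b_2)}{2}$, which is precisely the Jensen concavity of $E^*$ on $I \times I$.

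The technical crux is $(iii) \Rightarrow (iv)$. Normalizability of $E$ provides continuity of $E^*$ in the second variable, and standard regularity (Bernstein--Doetsch) upgrades the Jensen concavity of $E^*$ to full concavity on $I \times I$, securing condition~(a) of the quasideviation definition. For condition~(b), fix $x < y$ in $I$; for every $\mu > 0$ the function $e_\mu(t) := E^*(x, t) + \mu E^*(y, t)$ is concave with $e_\mu(x) > 0$ and $e_\mu(y) < 0$, and such a function has a unique zero in $(x, y)$ (a hypothetical second zero $c_2 > c_1$ would force $e_\mu \le 0$ on $(-\infty, c_1]$ by concavity, contradicting $e_\mu(x) > 0$). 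Hence the ratio $R(t) := E^*(y, t)/E^*(x, t)$ is injective on $(x, y)$; being continuous with boundary limits $R(t) \to -\infty$ as $t \to x^+$ and $R(t) \to 0^-$ as $t \to y^-$, it is strictly increasing, giving~(b). To conclude that $\D_E$ is Jensen concave, set $u := \D_E(x, \lambda)$ and $v := \D_E(y, \lambda)$; the entrywise Jensen concavity of $E^*$ gives $\sum_i \lambda_i E^*\big(\tfrac{x_i + y_i}{2}, \tfrac{u + v}{2}\big) \ge 0$, and since the concave defining function $t \mapsto \sum_i \lambda_i E^*(\tfrac{x_i+y_i}{2}, t)$ has its unique zero at $\D_E(\tfrac{x+y}{2}, \lambda)$ and is strictly decreasing there, we deduce $\D_E(\tfrac{x+y}{2}, \lambda) \ge \tfrac{u+v}{2}$.

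The passage $(iv) \Rightarrow (v)$ is immediate: once $E^*$ is a quasideviation, the four semideviation means collapse to $\D_E$ (by the results of \cite{Pal82a} together with the sandwich in Proposition~\ref{prop:SDM}), so the Jensen concavity of $\D_E$ propagates to all of them. The principal obstacle is the first half of $(iii) \Rightarrow (iv)$: extracting the ratio monotonicity of property~(b) from what is a priori only a pointwise concavity assumption on $E^*$, which is where the Bernstein--Doetsch upgrade and the uniqueness-of-zero argument for concave functions do the decisive work.
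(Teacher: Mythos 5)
Your proposal is correct and follows essentially the same route as the paper: the identical cyclic chain, the sandwich argument of Proposition~\ref{prop:SDM} for (i)$\Rightarrow$(ii), Lemma~\ref{Lim} applied to two-entry vectors with weights $(1,n)$ for (ii)$\Rightarrow$(iii), the Bernstein--Doetsch upgrade of $E^*$ plus the ``concave function cannot have two zeros there'' idea for (iii)$\Rightarrow$(iv), and the collapse of all four means to $\D_E$ for (iv)$\Rightarrow$(v). Your only (equally valid) variation is presentational: for the quasideviation property (b) you derive strict monotonicity of the ratio from uniqueness of the zero of $t\mapsto E^*(x,t)+\mu E^*(y,t)$ together with injectivity, continuity and the boundary limits, whereas the paper reduces to equal ratio values via Bolzano's theorem and reaches the same two-zero contradiction; when you invoke Bernstein--Doetsch, just state explicitly that $E^*\ge 0$ on the open set $\{(p,t)\in I^2 : p>t\}$ provides the local lower boundedness that the theorem requires.
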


\begin{proof} (i)$\Rightarrow$(ii). If $\M$ is any of the means $\LLD_E$, $\LD_E$, $\UD_E$, and $\UUD_E$, then by Proposition \ref{prop:SDM}, we have that $\LLD_E\leq \M\leq\UUD_E$. Thus, the Jensen-concavity of $\M$ implies
\Eq{*}{
   \UUD_E\Big(\frac{x+y}{2},\lambda\Big)\geq \M\Big(\frac{x+y}{2},\lambda\Big)
   \geq\frac{\M(x,\lambda)+\M(y,\lambda)}{2}
   \geq\frac{\LLD_E(x,\lambda)+\LLD_E(y,\lambda)}{2},
}
which was to be shown.

(ii)$\Rightarrow$(iii). To prove the Jensen concavity of $E^*$, let $x,y,u,v\in I$. Then, applying (ii) for the vectors $(x,u)$ and $(y,v)$ with the weight vector $(1,n)$, we get
\Eq{*}{
  n\Big(\UUD_E\Big(\Big(\frac{x+y}{2},\frac{u+v}{2}\Big),&(1,n)\Big)-\frac{u+v}{2}\Big)\\
  & \geq\frac{n(\LLD_E((x,u),(1,n))-u)+n(\LLD_E((y,v),(1,n))-v)}{2}.
}
Upon taking the limit $n\to\infty$ and applying Lemma \ref{Lim}, it follows that
\Eq{*}{
  E^*\Big(\frac{x+y}{2},\frac{u+v}{2}\Big)\geq \frac{E^*(x,u)+E^*(y,v)}{2},
}
which shows the Jensen concavity of $E^*$. On the other hand, $E^*$ is bounded from below (by zero) on the open set $\{(x,y)\in I^2\colon x>y\}$, therefore, the Bernstein--Doetsch theorem (cf.\ \cite{BerDoe15}) implies that it is also concave, consequently, it is also continuous.

(iii)$\Rightarrow$(iv). To verify that $E^*$ is a quasideviation, let $x<y$ be arbitrary from $I$. We need to show that the negative function
\Eq{*}{
  r(t):=\frac{E^*(y,t)}{E^*(x,t)}
}
is strictly increasing over the interval $(x,y)$. To the contrary, assume that there exist $t_1,t_2\in(x,y)$ with $t_1<t_2$, such that $r(t_1)\geq r(t_2)$. If $r(t_1)>r(t_2)$ then, using the Bolzano Mean Value Theorem for continuous functions and that $r(y)=0$, we can find an element $t_2'\in(t_2,y)$ such that $r(t_1)=r(t_2')$. Therefore, we may assume that already $r(t_1)=r(t_2)$ was valid. Define $\lambda:=\frac{-r(t_1)}{1-r(t_1)}=\frac{-r(t_2)}{1-r(t_2)}\in(0,1)$. By the definition of $r(t)$, it follows that
\Eq{*}{
  \lambda E^*(x,t_1)+(1-\lambda) E^*(y,t_1)=0=\lambda E^*(x,t_2)+(1-\lambda). E^*(y,t_2)
}
This means that the function
\Eq{*}{
  e(t):=\lambda E^*(x,t)+(1-\lambda) E^*(y,t) \qquad(t\in I)
}
has two distinct zeroes in $(x,y)$. On the other hand, by the concavity of $E^*$, the function $e$ is concave. Thus, $e(t)\leq 0$ for $t\in I\setminus(t_1,t_2)$, which contradicts the inequality $e(t)>0$ which is valid for $t<x$. The identity
\Eq{*}{
  r(t):=\frac{E^*(y,t)}{E^*(x,t)}=\frac{E(y,t)}{E(x,t)}
}
shows that $E$ is also a quasideviation. Due to this property, the four means 
$\LLD_E$, $\LD_E$, $\UD_E$, and $\UUD_E$ coincide and are equal to the deviation mean $\D_E=\D_{E^*}$. 

To complete the proof of implication (iii)$\Rightarrow$(iv) it remains to show that $\D_E$ is Jensen concave. Let $n\in\N$, $x=(x_1,\dots,x_n),y=(y_1,\dots,y_n)\in I^n$ and $\lambda=(\lambda_1,\dots,\lambda_n)\in W_n(\R)$. Denote 
$u:=\D_E(x,\lambda)$, $v:=\D_E(y,\lambda)$. Then, we have
\Eq{*}{
  \sum_{i=1}^n\lambda_i E^*(x_i,u)=0=\sum_{i=1}^n\lambda_i E^*(y_i,v).
}
On the other hand, using these equalities, the Jensen concavity of the function $E^*$, it follows that
\Eq{*}{
  \sum_{i=1}^n\lambda_i E^*\Big(\frac{x_i+y_i}{2},\frac{u+v}{2}\Big)
  \geq \sum_{i=1}^n\lambda_i\frac{E^*(x_i,u)+E^*(y_i,v)}{2}=0.
}
According to the definition of quasideviation means, this inequality implies that
\Eq{*}{
  \D_E\Big(\frac{x+y}{2},\lambda\Big)\geq \frac{u+v}{2}
  =\frac{\D_E(x,\lambda)+\D_E(y,\lambda)}{2}.
}

(iv)$\Rightarrow$(v). If $E$ is a quasideviation then all the four means $\LLD_E$, $\LD_E$, $\UD_E$, and $\UUD_E$ are identical and are equal to the Jensen concave deviation mean $\D_E=\D_{E^*}$.

The implication (v)$\Rightarrow$(i) is obvious.
\end{proof}

\section{Homogenizations of semideviation means}

In what follows, we provide lower and upper estimates for the lower and upper homogenization of semideviation means.
Let $I$ be a nonempty open interval with $\inf I=0$. Given a normalizable semideviation $E\colon I\times I\to\R$, let us define the \emph{lower and upper homogenization} 
$\underline{h}_E, \overline{h}_E$ of $E$ by
\Eq{*}{
  \underline{h}_E(x):=\liminf_{t\to0}\frac{E^*(xt,t)}{t} \qquad\mbox{and}\qquad 
  \overline{h}_E(x):=\limsup_{t\to0}\frac{E^*(xt,t)}{t} \qquad(x\in\R_+).
}
With these notations, we have

\begin{thm}\label{TEI}
Let $E\colon I\times I\to\R$ be a normalizable semideviation.
Assume that, for $x\in\R_+$, we have $-\infty<\underline{h}_E(x)\leq\overline{h}_E(x)<+\infty$ and
\Eq{*}{
  \sign(\underline{h}_E(x))=\sign(\overline{h}_E(x))=\sign(x-1).
}
Then
\Eq{EI}{
  \LLE_{\underline{h}_E}\leq \big(\UD_E\big)_\# \qquad\mbox{and}\qquad 
  \big(\LD_E\big)^\#\leq \UUE_{\overline{h}_E}.
}
\end{thm}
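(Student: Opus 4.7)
The plan is to prove both inequalities in parallel by controlling, for fixed $z>0$, the sign of the auxiliary function $t\mapsto\frac1t\sum_{i=1}^n\lambda_iE^*(tx_i,tz)$ as $t\to0^+$. Throughout I would work with the normalization $E^*$ in place of $E$, which is harmless since $E$ and $E^*$ generate identical semideviation means. The key identity, obtained by the substitution $s=tz$ with $z>0$, is
\Eq{*}{
\liminf_{t\to0^+}\frac{E^*(tx_i,tz)}{t}&=z\,\underline{h}_E\!\Big(\tfrac{x_i}{z}\Big),\\
\limsup_{t\to0^+}\frac{E^*(tx_i,tz)}{t}&=z\,\overline{h}_E\!\Big(\tfrac{x_i}{z}\Big).
}
The sign hypothesis on $\underline{h}_E,\overline{h}_E$ guarantees that $(x,y)\mapsto\underline{h}_E(x/y)$ and $(x,y)\mapsto\overline{h}_E(x/y)$ are homogeneous semideviations on $\R_+$ in the sense of the construction $E_f$ from Example~(4), so the means $\LLE_{\underline{h}_E}$ and $\UUE_{\overline{h}_E}$ are well-defined.

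For the first inequality in \eq{EI}, fix $n\in\N$ and $(x,\lambda)\in\R_+^n\times W_n(\R)$, and pick any $z$ with $0<z<\LLE_{\underline{h}_E}(x,\lambda)$ (such $z$ exists since the mean value property gives $\LLE_{\underline{h}_E}(x,\lambda)\ge\min_i x_i>0$). The alternative characterization $\LLD_{E_f}(x,\lambda)=\sup\{y\in\R_+:e|_{(0,y)}>0\}$ applied with $f=\underline{h}_E$ forces $\sum_{i=1}^n\lambda_i\underline{h}_E(x_i/z)>0$. Since $\lambda_i\geq0$ and each summand is finite by the boundedness hypothesis, the inequality $\liminf\sum\geq\sum\liminf$ for finitely many bounded-below terms yields
\Eq{*}{
\liminf_{t\to0^+}\frac{1}{t}\sum_{i=1}^n\lambda_iE^*(tx_i,tz)\geq z\sum_{i=1}^n\lambda_i\underline{h}_E(x_i/z)>0.
}
Consequently $\sum_i\lambda_iE^*(tx_i,tz)>0$ for all sufficiently small $t>0$, so $tz$ belongs to the defining set of $\UD_E(tx,\lambda)$ and thus $tz\leq\UD_E(tx,\lambda)$. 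Dividing by $t$ and taking $\liminf_{t\to0^+}$ gives $z\leq(\UD_E)_\#(x,\lambda)$, and letting $z\nearrow\LLE_{\underline{h}_E}(x,\lambda)$ finishes this half.

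The second inequality is obtained by an entirely symmetric argument: for any $z>\UUE_{\overline{h}_E}(x,\lambda)$ the characterization of $\UUE$ yields $\sum_i\lambda_i\overline{h}_E(x_i/z)<0$; the limsup version of the key identity, combined with $\limsup\sum\leq\sum\limsup$, gives $\sum_i\lambda_iE^*(tx_i,tz)<0$ for all small $t>0$, whence $tz\geq\LD_E(tx,\lambda)$, and passing to $\limsup_{t\to0^+}$ delivers $z\geq(\LD_E)^\#(x,\lambda)$. The main delicate point throughout is this passage from a limit inequality on $\underline{h}_E$ (or $\overline{h}_E$) to the eventual sign of the prelimit sum; it relies crucially on the finiteness assumption $-\infty<\underline{h}_E\leq\overline{h}_E<+\infty$, which prevents divergent summands and legitimizes the liminf/limsup--sum interchange, together with the non-negativity of the weights.
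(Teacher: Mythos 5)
Your proposal is correct and uses essentially the same mechanism as the paper's proof: the superadditivity of $\liminf$ (resp.\ subadditivity of $\limsup$) applied to $\tfrac1t\sum_{i}\lambda_i E^*(tx_i,tz)$, combined with the sign characterizations of the four semideviation means. The only difference is presentational — you argue directly (contrapositively) with a fixed $z$ below $\LLE_{\underline{h}_E}(x,\lambda)$ and deduce the eventual sign of the prelimit sum, whereas the paper assumes the inequality fails and extracts a null sequence, reaching the same contradiction; you also spell out the second inequality, which the paper leaves as the symmetric case.
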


\begin{proof}
Let $n\in\N$, $x\in \R_+^n$ and $\lambda\in W_n(\R)$. To the contrary of the first inequality in \eq{EI}, assume that there exists $y\in I$ such that
\Eq{*}{
\liminf_{t\to0}\frac{\UD_E(tx,\lambda)}{t}
  =\big(\UD_E\big)_\#(x,\lambda)
   <y<\LLE_{\underline{h}_E}(x,\lambda).
}
Then there exists a positive null sequence $(t_m)$ such that, for all $m\in\N$, 
\Eq{*}{
  \UD_E(t_mx,\lambda)<t_my.
}
It follows from the definition of the upper semideviation mean $\UD_E$ that, for all $m\in\N$, 
\Eq{*}{
  \sum_{i=1}^n\lambda_iE(t_mx_i,t_my)\le0,
}
therefore
\Eq{*}{
  \sum_{i=1}^n\lambda_iE^*(t_mx_i,t_my)\le0.
}
Define $y_m:=t_my$ for $m\in\N$. Then
\Eq{*}{
  \sum_{i=1}^n\lambda_i\frac{E^*(y_m\frac{x_i}y,y_m)}{y_m}\le0.
}
Upon taking the liminf as $m\to\infty$ and using the superadditivity of the liminf operation, it follows that
\Eq{*}{
  \sum_{i=1}^n\lambda_i\liminf_{m\to\infty}\frac{E^*(y_m\frac{x_i}y,y_m)}{y_m}\leq0.
}
Therefore, 
\Eq{*}{
  \sum_{i=1}^n\lambda_i \underline{h}_E\Big(\frac{x_i}y\Big)
  =\sum_{i=1}^n\lambda_i\liminf_{t\to0}\frac{E^*(t\frac{x_i}y,t)}{t}\leq0,
}
which implies that $\LLE_{\underline{h}_E}(x,\lambda)\leq y$, contradicting the choice of $y$.
\end{proof}

\begin{exa}
 Let $E \colon \R_+ \times \R_+ \to \R$ be a deviation given by $E(x,y)=\cosh(x)-\cosh(y)$. Then $\D_E$ is a quasiarithmetic mean and 
 \Eq{*}{
 E^*(x,y)=\frac{\cosh(x)-\cosh(y)}{\sinh(y)} \qquad (x,y \in \R_+ ).
 }
As $E$ is a deviation we know that $\UD_E=\LD_E=\D_E$. Furthermore 
 \Eq{*}{
\lim_{t \to 0} \frac{E^*(tx,t)}{t}
 =\lim_{t \to 
 0} \frac{\cosh(tx)-\cosh(t)}{t\sinh(t)}
 &=\lim_{t \to 0} \frac{1+\tfrac12 (tx)^2-1-\tfrac12 t^2+\Err(t^4)}{t^2+\Err(t^4)}\\
&=\lim_{t \to 0}\frac{\tfrac12t^2(x^2-1+\Err(t^2))}{t^2(1+\Err(t^2))}=\frac{x^2-1}2.
 }
As this limit exists for all $x$ we obtain $\overline{h}_E(x)=\underline{h}_E(x)=\tfrac12(x^2-1)=:h_E(x)$. However $h_E$ is continuous and strictly increasing, thus the mapping $(x,y) \mapsto h_E(x/y)$ is a quasideviation on $\R_+$. Consequently $\LLE_{h_E}=\UUE_{h_E}=\E_{h_E}$ is a homogeneous quasideviation mean. 

Furthermore, for all $n\in \N$ and a pair $(x,\lambda) \in I^n \times W_n(\R)$, we get that $y:=\E_{h_E}(x,\lambda)$ is a unique positive solution of the equation
\Eq{*}{
\sum_{i=1}^n \dfrac{\lambda_i}2 \Big(\Big(\dfrac {x_i}y\Big)^2-1\Big)=0,
}
which simplifies to 
\Eq{*}{
\E_{h_E}(x,\lambda)=y=\sqrt{\frac{\lambda_1x_1^2+\dots+\lambda_nx_n^2}{\lambda_1+\dots+\lambda_n}}=\P_2(x,\lambda).
}
So $\E_{h_E}=\P_2$. Whence, by \eq{EI}, as lower and upper semideviation means of $E$ coincide, we get
\Eq{*}{
\P_2=\E_{h_E}=\LLE_{h_E}\le (\UD_E)_\#=(\D_E)_\#\le (\D_E)^\#=(\LD_E)^\#\le\UUE_{h_E}=\E_{h_E}=\P_2.
}
Therefore all means, appearing in the line above are equal to each other. In particular, the lower and upper homogenization of the quasiarithmetic mean  $\D_E$ equals the power mean $\P_2$ (which, of course, could also be computed directly).
\end{exa}

Our next result shows that the statement of Theorem \ref{TEI} dramatically simplifies if $E$ is a normalizable quasideviation such that $\D_E$ is Jensen concave.

\begin{thm}\label{CEI} Let $E\colon I\times I\to\R$ be a normalizable semideviation such that $E^*$ is concave. Assume that, for all $x\in\R_+$, $\lim_{t\to0}E^*(xt,t)=0$. Then, for all $x\in\R_+$, we have $h_E(x):=\underline{h}_E(x)=\overline{h}_E(x)\in\R$, 
\Eq{sign}{
  \sign(h_E(x))=\sign(x-1),
}
and the function $h_E\colon\R_+\to\R$ so defined is concave and nondecreasing on $\R_+$, and is strictly increasing on $(0,1)$. Furthermore, $E$ is a quasideviation, the weighted mean $\D_E$ is Jensen concave, monotone, and
\Eq{EI+}{
  \E_{h_E}=\big(\D_E\big)_\# =\big(\D_E\big)^\#.
}
\end{thm}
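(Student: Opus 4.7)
The proof splits into three tasks that I will address in order. First, I will establish that $h_E(x)=\lim_{t\to 0^+}E^*(xt,t)/t$ is a well-defined real number enjoying the listed sign, concavity, and monotonicity properties. Second, I will invoke Theorem~\ref{JC} to conclude that $E$ is a quasideviation and that $\D_E$ is Jensen concave (hence monotone). Third, I will combine Theorems~\ref{TEI} and~\ref{thm:M*} with a direct analysis of the function $y\mapsto\sum_i\lambda_ih_E(x_i/y)$ to pin down both homogenizations of $\D_E$.

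For the first task, fix $x>0$ and let $\phi(t):=E^*(xt,t)$, the restriction of the concave $E^*$ to a ray through the origin; the hypothesis $\lim_{t\to 0}\phi(t)=0$ extends $\phi$ concavely to $t=0$. Chord-slope monotonicity then gives that $\phi(t)/t$ has a limit in $(-\infty,+\infty]$ as $t\to 0^+$. To secure finiteness, I will combine the concavity of $y\mapsto E^*(xt,y)$ with the normalization $\partial_2E^*(xt,xt)=-1$ and the identity $E^*(xt,xt)=0$ to get the tangent inequality $E^*(xt,y)\le xt-y$, yielding $\phi(t)/t\le x-1$. The sign identity then follows from $\sign E^*(xt,t)=\sign(x-1)$, and concavity of $h_E$ is inherited as a pointwise limit of the concave functions $x\mapsto E^*(xt,t)/t$. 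For nondecreasingness on $\R_+$, any strict decrease of a concave function that is positive on $(1,\infty)$ would drive it to $-\infty$; and for strict increase on $(0,1)$, constancy of $h_E$ on a nondegenerate subinterval $[a,b]\subset(0,1]$ would propagate, via concavity, to nonincreasing behavior on $[b,\infty)$, clashing with $h_E(1)=0>h_E(a)$.

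For the second task, concavity of $E^*$ is in particular Jensen concavity, so Theorem~\ref{JC} yields that $E^*$, and via the identity $E(y,t)/E(x,t)=E^*(y,t)/E^*(x,t)$ also $E$, is a quasideviation. The four semideviation means of $E$ therefore collapse to $\D_E$, which is Jensen concave by the same theorem and monotone as a standard feature of quasideviation means.

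The main obstacle is the third task. Theorem~\ref{thm:M*} converts the Jensen concavity of $\D_E$ into the equality $(\D_E)_\#=(\D_E)^\#$, and Theorem~\ref{TEI} provides the squeeze
\Eq{*}{
\LLE_{h_E}\le(\UD_E)_\#=(\D_E)_\#=(\D_E)^\#=(\LD_E)^\#\le\UUE_{h_E},
}
so everything reduces to proving $\LLE_{h_E}=\UUE_{h_E}$. Fixing $(x,\lambda)\in\R_+^n\times W_n(\R)$, I consider $e(y):=\sum_i\lambda_ih_E(x_i/y)$; continuity and nondecreasingness of $h_E$ render $e$ continuous and nonincreasing with zero set a compact interval $[a,b]=[\LLE_{h_E}(x,\lambda),\UUE_{h_E}(x,\lambda)]$. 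Supposing $a<b$, the identity $e(a)-e(b)=0$ is a sum of nonnegative terms $\lambda_i\bigl[h_E(x_i/a)-h_E(x_i/b)\bigr]$, each of which must vanish. Strict increase of $h_E$ on $(0,1)$ forces each interval $[x_i/b,x_i/a]$ into $[1,\infty)$, i.e., $b\le x_i$ for every $i$ with $\lambda_i>0$. Substituting this into $e(b)=0$, whose summands are now nonnegative by the sign of $h_E$, collapses each such $x_i$ to $b$, so $e(y)=\bigl(\sum_i\lambda_i\bigr)h_E(b/y)$ vanishes only at $y=b$, contradicting $a<b$.
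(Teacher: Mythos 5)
Most of your argument runs parallel to the paper's proof: the chord–slope argument along the ray $t\mapsto E^*(xt,t)$, concavity of $h_E$, the concavity-based contradictions for nondecreasingness and strict increase, the appeal to Theorem~\ref{JC} for the quasideviation property and Jensen concavity of $\D_E$, and the squeeze coming from Theorem~\ref{TEI}. Two sub-arguments genuinely differ and are both valid. First, you get finiteness of $h_E$ from the support-line inequality $E^*(xt,y)\le xt-y$ (the paper instead deduces finiteness from the concavity of $h_E$ combined with its nonpositivity on $(0,1)$); note that this same inequality, which gives $h_E(x)\le x-1$, is also what you should quote for the strict sign on $(0,1)$ — your stated reason, $\sign E^*(xt,t)=\sign(x-1)$, only yields $h_E(x)\le 0$ there, since a limit of negative numbers may vanish. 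Second, you prove $\LLE_{h_E}=\UUE_{h_E}$ by a direct analysis of the zero set of $e(y)=\sum_i\lambda_i h_E(x_i/y)$, whereas the paper verifies that $(x,y)\mapsto h_E(x/y)$ satisfies the quasideviation ratio condition; your route is more elementary and suffices for \eq{EI+}. Your invocation of Theorem~\ref{thm:M*} is unnecessary (the inequality $(\D_E)_\#\le(\D_E)^\#$ is automatic, so the squeeze already closes), but harmless.

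The genuine gap is the monotonicity of $\D_E$. Monotonicity is \emph{not} ``a standard feature of quasideviation means'': for instance, $E(x,y)=x(x-y)$ on $\R_+$ is a normalizable quasideviation (the ratio condition is easily checked), and its quasideviation mean is the weighted contraharmonic mean $\big(\sum_i\lambda_i x_i^2\big)/\big(\sum_i\lambda_i x_i\big)$, which is not monotone in its entries. So this part of the conclusion requires an argument, and the concavity of $E^*$ must enter: one shows, by an argument analogous to your proof of the nondecreasingness of $h_E$ (using the joint concavity of $E^*$ together with its sign property), that $E^*$ is nondecreasing in its first variable, and then monotonicity of $\D_E$ follows by substituting this into the defining equation $\sum_i\lambda_i E^*(x_i,y)=0$; this is essentially how the paper disposes of the monotonicity claim. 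As written, your proposal leaves this item unproved.
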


\begin{proof} Let $x\in\R_+$ be fixed. Consider the function
\Eq{*}{
   f(t):=\begin{cases}
         E^*(xt,t) &\mbox{ if } t>0,\,t\in I/x,\\
         0&\mbox{ if } t=0.
         \end{cases}
}
Then, from the concavity of $E^*$, it follows that $f$ is concave on $I/x$ hence it is continuous on $I/x$. It is also continuous at $0$ by our assumption, thus $f$ is concave on $(I/x)\cup\{0\}$. Due to well-known properties of concave functions, this implies that the difference ratio 
\Eq{*}{
  t\mapsto \frac{f(t)-f(0)}{t-0}=\frac{E^*(xt,t)}{t}
}
is nonincreasing on $I/x$. Hence its limit as $t\to0$, exists and is an extended real number in $\R\cup\{+\infty\}$. So far we have proved that $\underline{h}_E(x)=\overline{h}_E(x)=:h_E(x)$. If $x\in(1,\infty)$, then $h_E(x)\geq \frac{E^*(tx,t)}{t}>0$ for $t\in I/x$, therefore $h_E(x)>0$. We also have that $h_E(1)=0$ and $h_E(x)\leq0$ for $x\in(0,1)$.

On the other hand, for $x,y\in\R_+$ and $\lambda\in(0,1)$, applying the concavity of $E^*$, we get
\Eq{*}{
  h_E(\lambda x+(1-\lambda)y)
  &=\lim_{t\to0}\frac{E^*((\lambda x+(1-\lambda)y)t,t)}{t}\\
  &\geq\lim_{t\to0}\frac{\lambda E^*(xt,t)+(1-\lambda)E^*(yt,t)}{t}
  =\lambda h_E(x)+(1-\lambda) h_E(y).
}
This shows that $h_E$ is concave on $\R_+$. We show that it cannot take infinite values. Let $y\in\R_+$ be arbitrary, let $x=1/2$ and choose $\lambda\in(0,1)$ so that $\lambda x+(1-\lambda)y<1$. Then the above inequality shows that $h_E(y)<\infty$. 

If, for some $x\in(0,1)$, we had $h_E(x)=0$, then, by the concavity, $h_E$ would be nonpositive for $y\in\R_+\setminus(x,1)$. This contradiction completes the proof of the sign property of $h_E$.

Now we prove that $h_E$ is nondecreasing on $\R_+$. If this were not true, then there exist $0<x<y$ such that $h_E(x)>h_E(y)$. Let $a:\R_+\to\R$ denote the continuous affine function which interpolates $h_E$ at the points $x$ and $y$. The inequality $h_E(x)>h_E(y)$ implies that $\lim_{t\to\infty} a(t)=-\infty$. On the other hand, by the concavity of $h_E$, we have that $h_E(t)\leq a(t)$ for all $t\geq y$. Therefore, $\lim_{t\to\infty} h_E(t)=-\infty$, which contradicts the sign property of $h_E$ stated in \eq{sign}. In a similar manner, if $h_E$ were not strictly increasing in $(0,1)$, then there exist $0<x<y<1$ such that $h_E(x)=h_E(y)<0$. Now the interpolating affine function is a constant function with a negative value. Hence $\lim_{t\to\infty} h_E(t)<0$, which again contradicts \eq{sign}.

To complete the proof, finally we show that the function $F(x,y):=h_E(x/y)$ is a quasideviation on $\R_+^2$. The sign property of $F$ is obvious. The concavity of $h_E$ implies its continuity, hence $F$ is continuous in its second variable. Let $0<x<y$ and consider the functions
\Eq{*}{
   (x,y)\ni t\mapsto h_E(y/t)\qquad \mbox{and}\qquad 
   (x,y)\ni t\mapsto \frac{1}{-h_E(x/t)}.
}
Due to the monotonicity properties of $h_E$, it follows that the first is a positive nonincreasing function and the second one is a positive strictly decreasing function. Therefore, the ratio function 
\Eq{*}{
  \frac{F(y,t)}{F(x,t)}=-\frac{h_E(y/t)}{-h_E(x/t)}
}
is strictly increasing on $(x,y)$. In view of this quasideviation property, we have that $\LLD_F=\UUD_F=\D_F$, that is, $\LLE_{h_E}=\UUE_{h_E}=\E_{h_E}$.
On the other hand, by Theorem \ref{JC}, $E$ is also a quasideviation, hence the inequalities in \eq{EI} yield
\Eq{*}{
  \E_{h_E}=\LLE_{h_E}\leq\big(\UD_E\big)_\#=\big(\D_E\big)_\#
  \leq \big(\D_E\big)^\#=\big(\LD_E\big)^\#\leq \UUE_{h_E}=\E_{h_E}.
}
Notice that the most-left and most-right hand side coincide, whence it is in fact a sequence of equalities. This results assertion \eq{EI+}.

The Jensen concavity of $\D_E$ is a consequence of Theorem \ref{JC}. 
Using that $E^*$ is concave, it follows that it is nondecreasing in its second variable. (The proof is analogous to the proof of the nondecreasingness of $h_E$.)
\end{proof}

\section{Minkowski- and Hölder-type inequalities}

In what follows, we establish necessary and sufficient conditions for inequalities of Minkowski- and H\"older-type involving semideviation means. The proof is an elaborated adaptation of the methods from the papers by Dar\'oczy \cite{Dar71b} and P\'ales \cite{Pal88a}.

For the sake of convenience, given a function $f:J\times K\to I$ and two vectors $x=(x_1,\dots,x_n)\in J^n$, $y=(y_1,\dots,y_n)\in K^n$, we denote
\Eq{*}{
  f(x,y)=\big(f(x_1,y_1),\dots,f(x_n,y_n)\big).
} 

\begin{thm}
\label{thm:HoMiJe}
Let $I$, $J$, and $K$ be open intervals $E$, $F$, and $G$ be normalizable semideviations on the intervals $I$, $J$, and $K$, respectively. Let $f \colon J\times K \to I$ be a differentiable function. Then the following three assertions are equivalent:
\begin{enumerate}[(i)]
\item For all means $M\in\big\{\LLD_F,\LD_F,\UD_F,\UUD_F\big\}$ and 
$N\in\big\{\LLD_G,\LD_G,\UD_G,\UUD_G\big\}$, for all $n \in \N$, $x \in J^n$, $y\in K^n$, and $\lambda\in W_n(\R)$,
\Eq{add0}{
 \LLD_E\big(f(x,y),\lambda\big) 
   \le f\big(M(x,\lambda),\, N(y,\lambda)\big);
} 
\item There exist means $M\in\big\{\LLD_F,\LD_F,\UD_F,\UUD_F\big\}$ and 
$N\in\big\{\LLD_G,\LD_G,\UD_G,\UUD_G\big\}$ such that, for all $n \in \N$, $x \in J^n$, $y\in K^n$, and $\lambda\in W_n(\R)$, inequality \eq{add0} holds;
\item For all $p,u\in J$ and $q,v\in K$,
\Eq{add1}{
E^*\big(f(p,q),f(u,v)\big)
\le \partial_1f(u,v) F^*(p,u) +\partial_2f(u,v) G^*(q,v).
}
\end{enumerate}
\end{thm}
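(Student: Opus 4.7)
The plan is to establish the cycle \textup{(i)}$\Rightarrow$\textup{(ii)}$\Rightarrow$\textup{(iii)}$\Rightarrow$\textup{(i)}. The first arrow requires no work since \textup{(ii)} is just the specialization of \textup{(i)} to a single pair $(M,N)$.

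For \textup{(ii)}$\Rightarrow$\textup{(iii)}, I would fix $p,u\in J$ and $q,v\in K$ and plug the two-point vectors $(p,u)$ and $(q,v)$ together with the weight vector $(1,m)$ into \eq{add0}. Writing $u_m:=M((p,u),(1,m))$ and $v_m:=N((q,v),(1,m))$, the hypothesis becomes
\[
 \LLD_E\bigl((f(p,q),f(u,v)),(1,m)\bigr)\le f(u_m,v_m).
\]
Subtracting $f(u,v)$ from both sides, multiplying by $m$, and letting $m\to\infty$, Lemma \ref{Lim} transforms the left-hand side into $E^*(f(p,q),f(u,v))$. On the right, Lemma \ref{Lim} handles $M,N\in\{\LLD,\UUD\}$ directly and, for the two remaining variants $\LD,\UD$, the sandwich inequalities in Proposition \ref{prop:SDM} combined with the squeeze theorem yield $m(u_m-u)\to F^*(p,u)$ and $m(v_m-v)\to G^*(q,v)$. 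Since $f$ is differentiable at $(u,v)$ and $|(u_m,v_m)-(u,v)|=O(1/m)$, a first-order Taylor expansion then delivers $m(f(u_m,v_m)-f(u,v))\to \partial_1 f(u,v)F^*(p,u)+\partial_2 f(u,v)G^*(q,v)$, giving \eq{add1}.

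For \textup{(iii)}$\Rightarrow$\textup{(i)}, I set $u:=M(x,\lambda)$, $v:=N(y,\lambda)$, and $w:=f(u,v)$, and reduce the required bound $\LLD_E(f(x,y),\lambda)\le w$ to verifying that the continuous function $e_f(t):=\sum_{i=1}^n \lambda_i E^*(f(x_i,y_i),t)$ satisfies $e_f(w)\le 0$ (this places $w$ in $\{t:e_f(t)\le0\}$, whose infimum is $\LLD_E$). Applying \eq{add1} summand-wise with $p=x_i$, $q=y_i$ and summing against the weights $\lambda_i$ gives
\[
 e_f(w)\le \partial_1 f(u,v)\,e_F(u)+\partial_2 f(u,v)\,e_G(v),
\]
where $e_F$ and $e_G$ are the analogous sums for $F^*$ and $G^*$. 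The crucial observation is that, because $F$ and $G$ are normalizable, the maps $e_F,e_G$ are continuous in their argument, and this continuity together with the two-sided supremum/infimum characterizations displayed right after the definition of semideviation means forces $e_F(u)=0$ and $e_G(v)=0$ no matter which of the four variants $M,N$ denote. Therefore the right-hand side is zero, as required.

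The principal technical hurdle is the limit passage in \textup{(ii)}$\Rightarrow$\textup{(iii)}: one must be careful that the Taylor remainder, after multiplication by $m$, truly vanishes, which rests on the $O(1/m)$ rate provided by Lemma \ref{Lim}. A secondary, short but necessary, point is that Lemma \ref{Lim} is stated only for $\LLD$ and $\UUD$, so the corresponding limits for $\LD$ and $\UD$ have to be recovered via the ordering in Proposition \ref{prop:SDM}.
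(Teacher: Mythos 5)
Your proposal is correct and follows essentially the same route as the paper: the trivial (i)$\Rightarrow$(ii), the two-point substitution with weights $(1,m)$ plus Lemma \ref{Lim} and differentiability of $f$ for (ii)$\Rightarrow$(iii), and the summand-wise application of \eq{add1} with the vanishing of $e_F$, $e_G$ at the chosen means for (iii)$\Rightarrow$(i). The only difference is that you spell out details the paper leaves implicit (the squeeze argument extending Lemma \ref{Lim} to $\LD$, $\UD$ and the control of the Taylor remainder), which is a welcome but not divergent refinement.
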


\begin{proof} The implication (i)$\Rightarrow$(ii) is obvious.
 
To prove (ii)$\Rightarrow$(iii), assume that \eq{add0} holds for some $M\in\big\{\LLD_F,\LD_F,\UD_F,\UUD_F\big\}$ and $N\in\big\{\LLD_G,\LD_G,\UD_G,\UUD_G\big\}$. Let $p,u\in J$ and $q,v\in K$. Then, by \eq{add0}, we have
\Eq{*}{
n \Big(\LLD_E\Big(\big(f(p,q),f(u,v)\big)&,(1,n)\Big)-f(u,v) \Big)\\
&\le 
n \Big(f\Big(M\big((p,u),(1,n)\big),N\big((q,v),(1,n)\big)-f(u,v)\Big),
}
thus
\Eq{*}{
&\lim_{n \to \infty} n \Big(\LLD_E\Big(\big(f(p,q),f(u,v)\big),(1,n)\Big)-f(u,v) \Big) \\
&\quad\le \partial_1f(u,v) \lim_{n \to \infty} n \big(M\big((p,u),(1,n)\big)-u\big) +\partial_2f(u,v) \lim_{n\to\infty} n \big(N\big((q,v),(1,n)\big)-v\big).
}
Computing the limits with the help of Lemma~\ref{Lim}, we obtain that \eq{add1} is valid.

To show that the implication (iii)$\Rightarrow$(i) is also valid, assume that \eq{add1} holds. Let $M\in\big\{\LLD_F,\LD_F,\UD_F,\UUD_F\big\}$ and $N\in\big\{\LLD_G,\LD_G,\UD_G,\UUD_G\big\}$ be arbitrary and let $n \in \N$, $x=(x_1,\dots,x_n) \in J^n$, $y=(y_1,\dots,y_n)\in K^n$, and $\lambda=(\lambda_1,\dots,\lambda_n) \in W_n(\R)$.
Define
\Eq{*}{
e_{E}(t):=\sum_{i=1}^n \lambda_i E^*(f(x_i,y_i),t), \qquad
e_{F}(t):=\sum_{i=1}^n \lambda_i F^*(x_i,t), \quad e_{G}(t):=\sum_{i=1}^n \lambda_i G^*(y_i,t),
}
for $t$ in $I$, $J$, and $K$, respectively.
Now, if we substitute $p:=x_i$ and $q:=y_i$ in inequality \eq{add1}, multiply it by $\lambda_i \ge 0$ and sum up the inequalities side by side for all $i\in \{1,\dots,n\}$, we obtain, for all $(u,v)\in J\times K$,
\Eq{uv}{
e_{E}(f(u,v)) \le \partial_1f(u,v) e_{F}(u) +\partial_2f(u,v) e_{G}(v).
}
Now putting $u=M(x,\lambda)$ and $v=N(y,\lambda)$, we have that $e_{F}(u)=e_{G}(v)=0$, hence the inequality $e_{E}(f(u,v)) \le 0$ is valid. It implies that $\LLD_E(f(x,y),\lambda)\leq f(u,v)$, which was to be proved.
\end{proof}

In the case, when the operation $f:J\times K\to I$ is increasing in each of its variables, we have the following more precise statement.

\begin{thm}
\label{thm:HoMi}
Let $I$, $J$, and $K$ be open intervals $E$, $F$, and $G$ be normalizable semideviations on the intervals $I$, $J$, and $K$, respectively. Let $f \colon J\times K \to I$ be a differentiable function such that $\partial_1f,\partial_2f\geq0$ and $\partial_1f+\partial_2f>0$ on $J\times K$. Then the following assertions are equivalent:
\begin{enumerate}[(i)]
\item For all $n \in \N$, $x \in J^n$, $y\in K^n$, and $\lambda\in W_n(\R)$,
\Eq{*}{
 \LLD_E\big(f(x,y),\lambda\big) 
   \le f\big(\LLD_F(x,\lambda),\, \LLD_G(y,\lambda)\big);
} 
\item For all $n \in \N$, $x \in J^n$, $y\in K^n$, and $\lambda\in W_n(\R)$,
\Eq{*}{
 \LD_E\big(f(x,y),\lambda\big) 
   \le f\big(\LD_F(x,\lambda),\, \LD_G(y,\lambda)\big);
}
\item For all $n \in \N$, $x \in J^n$, $y\in K^n$, and $\lambda\in W_n(\R)$,
\Eq{*}{
 \UD_E\big(f(x,y),\lambda\big) 
   \le f\big(\UD_F(x,\lambda),\, \UD_G(y,\lambda)\big);
} 
\item For all $n \in \N$, $x \in J^n$, $y\in K^n$, and $\lambda\in W_n(\R)$,
\Eq{*}{
 \UUD_E\big(f(x,y),\lambda\big) 
   \le f\big(\UUD_F(x,\lambda),\, \UUD_G(y,\lambda)\big);
}
\item For all $n \in \N$, $x \in J^n$, $y\in K^n$, and $\lambda\in W_n(\R)$,
\Eq{*}{
 \LLD_E\big(f(x,y),\lambda\big) 
   \le f\big(\UUD_F(x,\lambda),\, \UUD_G(y,\lambda)\big);
}
\item For all $p,u\in J$ and $q,v\in K$, inequality \eq{add1} holds.
\end{enumerate}
\end{thm}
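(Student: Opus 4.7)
The skeleton of the argument is Theorem~\ref{thm:HoMiJe}: its condition (iii) is exactly (vi) here, while its conditions (i)--(ii) are the ``$\LLD_E$-on-the-left'' variants of the present (i)--(v). The plan is to invoke Theorem~\ref{thm:HoMiJe} as a black box wherever possible and to bridge the gap to the variants with $\LD_E$, $\UD_E$, $\UUD_E$ on the left by leveraging the monotonicity hypothesis on $f$.

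For the reverse direction, Proposition~\ref{prop:SDM} gives $\LLD_E\le\LD_E,\UD_E,\UUD_E$, so each of (i)--(v) is at least as strong as
\Eq{*}{
\LLD_E\big(f(x,y),\lambda\big)\le f\big(M(x,\lambda),N(y,\lambda)\big)
}
for a specific pair $(M,N)\in\{\LLD_F,\LD_F,\UD_F,\UUD_F\}\times\{\LLD_G,\LD_G,\UD_G,\UUD_G\}$; this is precisely Theorem~\ref{thm:HoMiJe}(ii), and its implication (ii)$\Rightarrow$(iii) produces (vi). The easy forward implications (vi)$\Rightarrow$(i) and (vi)$\Rightarrow$(v) are instances of Theorem~\ref{thm:HoMiJe}(iii)$\Rightarrow$(i) with $(M,N)=(\LLD_F,\LLD_G)$ and $(\UUD_F,\UUD_G)$, respectively.

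The substantive content lies in (vi)$\Rightarrow$(ii),\,(iii),\,(iv), and in all three cases the plan is to reuse the pointwise inequality
\Eq{*}{
e_E(f(u,v))\le\partial_1 f(u,v)\,e_F(u)+\partial_2 f(u,v)\,e_G(v) \qquad \text{for } (u,v)\in J\times K,
}
established in the proof of Theorem~\ref{thm:HoMiJe} (cf.\ \eq{uv}), combined with $\partial_1 f,\partial_2 f\ge 0$ and $\partial_1 f+\partial_2 f>0$. For (ii), by the definition of $\LD$ one can pick sequences $u_m\to\LD_F(x,\lambda)$ and $v_m\to\LD_G(y,\lambda)$ with $e_F(u_m),e_G(v_m)<0$; the displayed inequality then yields $e_E(f(u_m,v_m))<0$ strictly, so $\LD_E(f(x,y),\lambda)\le f(u_m,v_m)$, and letting $m\to\infty$ and using continuity of $f$ gives (ii). For (iii) the same scheme applies with $u\ge\UD_F,v\ge\UD_G$ (so $e_F,e_G\le 0$ by continuity), and for (iv) with $u>\UUD_F,v>\UUD_G$ (yielding strict $e_F,e_G<0$).

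The main technical obstacle will be (vi)$\Rightarrow$(iv), where one needs $e_E(t)<0$ \emph{strictly} on the whole tail $(f(\UUD_F,\UUD_G),\sup I)\cap I$, not only on the image of $f$. The plan is to split this tail at $\alpha:=\sup_{J\times K}f$. On the lower piece $(f(\UUD_F,\UUD_G),\alpha)$, the monotonicity and continuity of $f$ show that $f$ maps $(\UUD_F,\sup J)\times(\UUD_G,\sup K)$ onto the interval $(f(\UUD_F,\UUD_G),\alpha)$, so every such $t$ is realized as $f(u,v)$ with $u>\UUD_F,v>\UUD_G$, and the displayed inequality gives $e_E(t)<0$. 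On the upper piece $[\alpha,\sup I)\cap I$, the hypothesis $\partial_1 f+\partial_2 f>0$ rules out an interior maximum of $f$ on $J\times K$ by the first-order necessary condition, so $f(x_i,y_i)<\alpha\le t$ for every $i$, and the semideviation sign rule $\sign E^*(\cdot,t)=\sign(\cdot-t)$ forces $e_E(t)<0$ directly. Carefully maintaining strict inequalities throughout this splitting around $\alpha$ is the delicate point.
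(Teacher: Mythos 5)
Your proposal is correct and, for the most part, follows the same skeleton as the paper's proof: the reverse implications are delegated to Theorem~\ref{thm:HoMiJe} (the paper first passes through (v), you invoke its assertion (ii) directly with the appropriate pair $(M,N)$ --- an immaterial difference), and the forward implications all rest on the pointwise inequality \eq{uv} evaluated at suitable $(u,v)$, exactly as in the paper; the arguments for (i) and (ii) coincide with the paper's. The genuine divergence is the treatment of the tail $\{t>f(u_0,v_0)\}$ in the $\UD$/$\UUD$ cases, where $u_0,v_0$ denote the corresponding means of $x$ and $y$. The paper fixes an index $\alpha$ maximizing $f(x_i,y_i)$, represents every $t\in\big(f(u_0,v_0),f(x_\alpha,y_\alpha)\big]$ as $f(u,v)$ with $u\in(u_0,x_\alpha]$, $v\in(v_0,y_\alpha]$, and disposes of $t>f(x_\alpha,y_\alpha)$ by the sign rule; note this representation tacitly needs $x_\alpha>u_0$ and $y_\alpha>v_0$, which is not automatic for the maximizing index. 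Your split at $\alpha:=\sup_{J\times K}f$ sidesteps that point: the corner region $(u_0,\sup J)\times(v_0,\sup K)$ is connected and $f$ is continuous and nondecreasing in each variable, so its image is an interval covering $\big(f(u_0,v_0),\alpha\big)$, while on $[\alpha,\sup I)\cap I$ the observation that $\partial_1f+\partial_2f>0$ excludes an interior maximum gives $f(x_i,y_i)<\alpha\le t$ and hence $e_E(t)<0$ directly from the sign rule; this is a complete and arguably cleaner treatment, bought at the price of one extra (easy) first-order observation. One small correction: the tail analysis is needed for (vi)$\Rightarrow$(iii) as well, not only for (iv), since $\UD_E(f(x,y),\lambda)\le f(u_0,v_0)$ requires $e_E\le0$ on all of $\big(f(u_0,v_0),\sup I\big)\cap I$ and not merely at points of the form $f(u,v)$; your splitting argument applies there verbatim with non-strict inequalities, so nothing essential is missing, but your sketch of (iii) as written stops short of this step.
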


\begin{proof} Proposition \ref{prop:SDM} and the monotonicity property of $f$ yield that each of the assertions (i), (ii), (iii), and (iv) implies assertion (v).

The implication (v)$\Rightarrow$(vi) is a consequence of implication (ii)$\Rightarrow$(iii) of the previous theorem.

It remains to prove that each of the assertions (i), (ii), (iii), and (iv) is a consequence of (vi). From now on, we assume that assertions (vi) holds.

In the rest of the proof let $n\in\N$, $x=(x_1,\dots,x_n) \in J^n$, $y=(y_1,\dots,y_n)\in K^n$, and $\lambda=(\lambda_1,\dots,\lambda_n) \in W_n(\R)$ and define the functions $e_E$, $e_F$, and $e_G$ as in the proof of the previous theorem. Then, repeating the same argument as therein, we get the inequality \eq{uv} for all $(u,v)\in J\times K$.

To see that (i) holds, let $u_0:=\LLD_F(x,\lambda)$ and $v_0:=\LLD_G(y,\lambda)$. Then $e_F(u_0)=0$ and $e_G(v_0)=0$, therefore, by inequality \eq{uv}, we get  $e_E(f(u_0,v_0))\leq0$ and hence $\LLD_E(f(x,y),\lambda)\leq f(u_0,v_0)$.

To see that (ii) is valid, let $u_0:=\LD_F(x,\lambda)$ and $v_0:=\LD_G(y,\lambda)$. Then there exist sequences $u_n>u_0$ and $v_n>v_0$ converging to $u_0$ and $v_0$, respectively, such that $e_F(u_n)<0$ and $e_G(v_n)<0$ for all $n\in\N$.  Hence, inequality \eq{uv} and the sign properties of the partial derivatives of $f$ imply that $e_E(f(u_n,v_n))<0$ for all $n\in\N$. Then, we have that $\LD_E(f(x,y),\lambda)\leq f(u_n,v_n)$ for all $n$. Passing the limit $n\to\infty$, it follows that
$\LD_E(f(x,y),\lambda)\leq f(u_0,v_0)$.

To show that (iii) is true, denote $u_0:=\UD_F(x,\lambda)$ and $v_0:=\UD_G(y,\lambda)$. Then, for all $u>u_0$ and $v>v_0$, we get that $e_F(u)\leq0$ and $e_G(v)\leq0$. Hence, inequality \eq{uv} and the sign properties of the partial derivatives of $f$ imply that $e_E(f(u,v))\leq0$ for all $u>u_0$ and $v>v_0$. Choose $\alpha\in\{1,\dots,n\}$ so that $f(x_\alpha,y_\alpha)=\max_{1\leq i\leq n} f(x_i,y_i)$. Then, for $t>f(x_\alpha,y_\alpha)$, we have that $e_E(t)<0$. In the case when $t\in(f(u_0,v_0),f(x_\alpha,y_\alpha)]$, there exist $u\in(u_0,x_\alpha]$ and $v\in(v_0,y_\alpha]$ such that $t=f(u,v)$. Hence, we have proved that $e_E(t)\leq 0$ also for $t>f(u_0,v_0)$. This implies that $\UD_E(x,\lambda)\leq f(u_0,v_0)$.

To verify that (iv) is valid, denote $u_0:=\UUD_F(x,\lambda)$ and $v_0:=\UUD_G(y,\lambda)$. Then, for all $u>u_0$ and $v>v_0$, we get that $e_F(u)<0$ and $e_G(v)<0$. Hence, inequality \eq{uv} and the sign properties of the partial derivatives of $f$ imply that $e_E(f(u,v))<0$ for all $u>u_0$ and $v>v_0$. Choose $\alpha\in\{1,\dots,n\}$ so that $f(x_\alpha,y_\alpha)=\max_{1\leq i\leq n} f(x_i,y_i)$. Then, for $t>f(x_\alpha,y_\alpha)$, we have that $e_E(t)<0$. In the case when $t\in(f(u_0,v_0),f(x_\alpha,y_\alpha)]$, there exist $u\in(u_0,x_\alpha]$ and $v\in(v_0,y_\alpha]$ such that $t=f(u,v)$. Hence, we have proved that $e_E(t)<0$ also for $t>f(u_0,v_0)$. This implies that $\UUD_E(x,\lambda)\leq f(u_0,v_0)$.
\end{proof}

In what follows we present some special cases of the previous theorem. Namely, when $f$ is of the form $f(x,y)=x$, then the above result reduces to Theorem \ref{com}. Taking $f$ to be the arithmetic mean and $E=F=G$, the equivalence of statements (i), (ii), and (iii) of Theorem \ref{JC} follows also from the above theorem. If $f$ is either the addition or multiplication of the two variables, then we can deduce characterizations of Minkowski- and H\"older-type inequalities that are stated in the following two corollaries.

\begin{cor}
Let $J$ and $K$ be open intervals $E$, $F$, and $G$ be normalizable semideviations on the intervals $I:=J+K$, $J$, and $K$, respectively. Then the following assertions are equivalent:
\begin{enumerate}[(i)]
\item For all $n \in \N$, $x \in J^n$, $y\in K^n$, and $\lambda\in W_n(\R)$,
\Eq{*}{
 \LLD_E(x+y,\lambda) 
   \le \LLD_F(x,\lambda) + \LLD_G(y,\lambda);
} 
\item For all $n \in \N$, $x \in J^n$, $y\in K^n$, and $\lambda\in W_n(\R)$,
\Eq{*}{
 \LD_E(x+y,\lambda) 
   \le \LD_F(x,\lambda) + \LD_G(y,\lambda);
} 
\item For all $n \in \N$, $x \in J^n$, $y\in K^n$, and $\lambda\in W_n(\R)$,
\Eq{*}{
 \UD_E(x+y,\lambda) 
   \le \UD_F(x,\lambda) + \UD_G(y,\lambda);
} 
\item For all $n \in \N$, $x \in J^n$, $y\in K^n$, and $\lambda\in W_n(\R)$,
\Eq{*}{
 \UUD_E(x+y,\lambda) 
   \le \UUD_F(x,\lambda) + \UUD_G(y,\lambda);
} 
\item For all $n \in \N$, $x \in J^n$, $y\in K^n$, and $\lambda\in W_n(\R)$,
\Eq{*}{
 \LLD_E(x+y,\lambda) 
   \le \UUD_F(x,\lambda) + \UUD_G(y,\lambda);
} 
\item For all $p,u\in J$ and $q,v\in K$,
\Eq{*}{
E^*\big(p+q,u+v\big)
\le F^*(p,u) + G^*(q,v).
}
\end{enumerate}
\end{cor}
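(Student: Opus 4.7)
The plan is to deduce this corollary as an immediate specialization of Theorem \ref{thm:HoMi} with the addition operation $f\colon J\times K\to I$, $f(x,y):=x+y$, noting that $I=J+K$ is an open interval whenever $J$ and $K$ are, so the setting of that theorem is genuinely available.

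First I would verify the hypotheses of Theorem \ref{thm:HoMi} for this choice of $f$. The function $f$ is smooth with $\partial_1 f(u,v)=\partial_2 f(u,v)=1$ for all $(u,v)\in J\times K$, so in particular $\partial_1f,\partial_2f\geq 0$ and $\partial_1f+\partial_2f\equiv 2>0$. Thus every hypothesis of Theorem \ref{thm:HoMi} is satisfied.

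Next, I would simply read off that under the substitution $f(x,y)=x+y$, the vector $f(x,y)$ in the sense of the coordinate-wise convention $f(x,y)=(f(x_1,y_1),\dots,f(x_n,y_n))$ is nothing but $x+y$, and $f(M(x,\lambda),N(y,\lambda))=M(x,\lambda)+N(y,\lambda)$ for any means $M,N$. Consequently, assertions (i), (ii), (iii), (iv), (v) of Theorem \ref{thm:HoMi} transcribe verbatim into the assertions (i), (ii), (iii), (iv), (v) of the present corollary. Likewise, condition \eq{add1} of Theorem \ref{thm:HoMi} becomes
\Eq{*}{
E^*(p+q,u+v)\le 1\cdot F^*(p,u)+1\cdot G^*(q,v)=F^*(p,u)+G^*(q,v),
}
which is exactly assertion (vi) here.

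Since Theorem \ref{thm:HoMi} guarantees the equivalence of the six conditions on its list, the same equivalence transfers to the six conditions of this corollary. There is no real obstacle here; the only thing that deserves a brief mention is that $J+K$ is an open interval so that $E$ can indeed be posed on it, and that differentiability of $f(x,y)=x+y$ with the stated derivative signs is trivial.
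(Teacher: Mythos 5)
Your proposal is correct and follows exactly the route the paper intends: the corollary is stated there as a special case of Theorem \ref{thm:HoMi} obtained by taking $f(x,y)=x+y$, for which $\partial_1f=\partial_2f=1$, so the hypotheses hold and condition \eq{add1} reduces to $E^*(p+q,u+v)\le F^*(p,u)+G^*(q,v)$. Your verification of the hypotheses and the transcription of conditions (i)--(vi) matches the paper's (implicit) argument.
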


\begin{cor}
Let $J,\,K \subset \R_+$ be open intervals $E$, $F$, and $G$ be normalizable semideviations on the intervals $I:=J \cdot K$, $J$, and $K$, respectively. Then the following assertions are equivalent:
\begin{enumerate}[(i)]
\item For all $n \in \N$, $x \in J^n$, $y\in K^n$, and $\lambda\in W_n(\R)$,
\Eq{*}{
 \LLD_E(x\cdot y,\lambda) 
   \le \LLD_F(x,\lambda) \cdot \LLD_G(y,\lambda);
} 
\item For all $n \in \N$, $x \in J^n$, $y\in K^n$, and $\lambda\in W_n(\R)$,
\Eq{*}{
 \LD_E(x\cdot y,\lambda) 
   \le \LD_F(x,\lambda) \cdot \LD_G(y,\lambda);
} 
\item For all $n \in \N$, $x \in J^n$, $y\in K^n$, and $\lambda\in W_n(\R)$,
\Eq{*}{
 \UD_E(x\cdot y,\lambda) 
   \le \UD_F(x,\lambda) \cdot \UD_G(y,\lambda);
} 
\item For all $n \in \N$, $x \in J^n$, $y\in K^n$, and $\lambda\in W_n(\R)$,
\Eq{*}{
 \UUD_E(x\cdot y,\lambda) 
   \le \UUD_F(x,\lambda) \cdot \UUD_G(y,\lambda);
} 
\item For all $n \in \N$, $x \in J^n$, $y\in K^n$, and $\lambda\in W_n(\R)$,
\Eq{*}{
 \LLD_E(x\cdot y,\lambda) 
   \le \UUD_F(x,\lambda) \cdot \UUD_G(y,\lambda);
}
\item For all $p,u\in J$ and $q,v\in K$,
\Eq{*}{
E^*\big(pq,uv\big)
\le v\, F^*(p,u) +u\, G^*(q,v).
}
\end{enumerate}
\end{cor}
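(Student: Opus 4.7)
The plan is to obtain this corollary as a direct specialisation of Theorem \ref{thm:HoMi}, applied to the multiplication operation $f\colon J\times K \to J\cdot K$ defined by $f(x,y):=xy$.

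First I would verify the hypotheses of Theorem \ref{thm:HoMi}. Clearly $f$ is differentiable with partial derivatives $\partial_1 f(x,y)=y$ and $\partial_2 f(x,y)=x$. Since $J,K\subset\R_+$, both partial derivatives are strictly positive on $J\times K$, which in particular guarantees $\partial_1 f,\partial_2 f\geq 0$ and $\partial_1 f+\partial_2 f>0$ on all of $J\times K$. The image of $f$ is contained in $J\cdot K$, which is exactly the interval $I$ on which $E$ is assumed to be a normalizable semideviation.

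With this choice of $f$, the vectorised notation $f(x,y)=(x_1 y_1,\dots,x_n y_n)$ coincides with the componentwise product $x\cdot y$, so assertions (i)--(v) of Theorem \ref{thm:HoMi} translate verbatim into assertions (i)--(v) of the present corollary. Finally, assertion (vi) of the theorem, namely the pointwise inequality \eq{add1}, becomes
$$E^*(pq,uv) \le \partial_1 f(u,v)\,F^*(p,u)+\partial_2 f(u,v)\,G^*(q,v) = v\,F^*(p,u)+u\,G^*(q,v)$$
for all $p,u\in J$ and $q,v\in K$, which is precisely assertion (vi) of the corollary.

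Since all of the analytic work has already been carried out in Theorem \ref{thm:HoMi}, no genuine obstacle is expected here; the only points that require a line of care are the positivity of $\partial_1 f$ and $\partial_2 f$ (which rely on $J,K\subset \R_+$) and the observation that $f$ is differentiable on the open rectangle $J\times K$, so that Theorem \ref{thm:HoMi} is applicable without modification.
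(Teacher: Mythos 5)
Your proposal is correct and coincides with the paper's own derivation: the corollary is obtained by specialising Theorem \ref{thm:HoMi} to $f(x,y)=xy$, using $J,K\subset\R_+$ to get $\partial_1f(u,v)=v>0$ and $\partial_2f(u,v)=u>0$, so condition \eq{add1} becomes exactly assertion (vi). Nothing further is needed.
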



\begin{thebibliography}{10}

\bibitem{Arr65}
K.~J. Arrow.
\newblock {\em {Aspects of the Theory of Risk-Bearing}}.
\newblock Yrjö Jahnsson Foundation, Helsinki, 1965.

\bibitem{BajPal09b}
Sz. Baják and Zs. Páles.
\newblock {Computer aided solution of the invariance equation for two-variable
  {G}ini means}.
\newblock {\em Comput. Math. Appl.}, 58:334–340, 2009.

\bibitem{BajPal10}
Sz. Baják and Zs. Páles.
\newblock {Computer aided solution of the invariance equation for two-variable
  {S}tolarsky means}.
\newblock {\em Appl. Math. Comput.}, 216(11):3219–3227, 2010.

\bibitem{BajPal13}
Sz. Baják and Zs. Páles.
\newblock {Solving invariance equations involving homogeneous means with the
  help of computer}.
\newblock {\em Appl. Math. Comput.}, 219(11):6297–6315, 2013.

\bibitem{BecBel61}
E.~F. Beckenbach and R.~Bellman.
\newblock {\em {Inequalities}}.
\newblock Springer-Verlag, Berlin, 1961.

\bibitem{BerDoe15}
F.~Bernstein and G.~Doetsch.
\newblock {Zur {T}heorie der konvexen {F}unktionen}.
\newblock {\em Math. Ann.}, 76(4):514–526, 1915.

\bibitem{Bul03}
P.~S. Bullen.
\newblock {\em {Handbook of means and their inequalities}}, volume 560 of {\em
  {Mathematics and its Applications}}.
\newblock Kluwer Academic Publishers Group, Dordrecht, 2003.

\bibitem{Dar71b}
Z.~Daróczy.
\newblock {A general inequality for means}.
\newblock {\em Aequationes Math.}, 7(1):16–21, 1971.

\bibitem{Dar72b}
Z.~Daróczy.
\newblock {Über eine {K}lasse von {M}ittelwerten}.
\newblock {\em Publ. Math. Debrecen}, 19:211–217 (1973), 1972.

\bibitem{HarLitPol34}
G.~H. Hardy, J.~E. Littlewood, and G.~Pólya.
\newblock {\em {Inequalities}}.
\newblock Cambridge University Press, Cambridge, 1934.
\newblock (first edition), 1952 (second edition).

\bibitem{Ked94}
K.~S. Kedlaya.
\newblock {Proof of a mixed arithmetic-mean, geometric-mean inequality}.
\newblock {\em Amer. Math. Monthly}, 101(4):355–357, 1994.

\bibitem{Ked99}
K.~S. Kedlaya.
\newblock {Notes: {A} {W}eighted {M}ixed-{M}ean {I}nequality}.
\newblock {\em Amer. Math. Monthly}, 106(4):355–358, 1999.

\bibitem{LosPal96}
L.~Losonczi and Zs. Páles.
\newblock {Minkowski's inequality for two variable {G}ini means}.
\newblock {\em Acta Sci. Math. (Szeged)}, 62:413–425, 1996.

\bibitem{LosPal98}
L.~Losonczi and Zs. Páles.
\newblock {Minkowski's inequality for two variable difference means}.
\newblock {\em Proc. Amer. Math. Soc.}, 126(3):779–789, 1998.

\bibitem{MitPecFin91}
D.~S. Mitrinović, J.~E. Pečarić, and A.~M. Fink.
\newblock {\em {Inequalities {I}nvolving {F}unctions and {T}heir {I}ntegrals
  and {D}erivatives}}, volume~53 of {\em {Mathematics and its Applications
  (East European Series)}}.
\newblock Kluwer Academic Publishers Group, Dordrecht, 1991.

\bibitem{MitPecFin93}
D.~S. Mitrinović, J.~E. Pečarić, and A.~M. Fink.
\newblock {\em {Classical and {N}ew {I}nequalities in {A}nalysis}}, volume~61
  of {\em {Mathematics and its Applications (East European Series)}}.
\newblock Kluwer Academic Publishers Group, Dordrecht, 1993.

\bibitem{Pas13}
P.~Pasteczka.
\newblock {When is a family of generalized means a scale?}
\newblock {\em Real Anal. Exchange}, 38(1):193–209, 2012/13.

\bibitem{Pas15a}
P.~Pasteczka.
\newblock {Scales of quasi-arithmetic means determined by an invariance
  property}.
\newblock {\em J. Difference Equ. Appl.}, 21(8):742–755, 2015.

\bibitem{Pra64}
J.~W. Pratt.
\newblock {Risk Aversion in the Small and in the Large}.
\newblock {\em Econometrica}, 32(1/2):122–136, 1964.

\bibitem{Pal82a}
Zs. Páles.
\newblock {Characterization of quasideviation means}.
\newblock {\em Acta Math. Acad. Sci. Hungar.}, 40(3-4):243–260, 1982.

\bibitem{Pal82c}
Zs. Páles.
\newblock {\em {Kvázieltérés középértékek és egyenlőtlenségek
  ({Q}uasideviation {M}eans and {I}nequalities), (in {H}ungarian)}}.
\newblock Phd thesis, Institute of Mathematics and Informatics, Lajos Kossuth
  University, Debrecen, Hungary, 1982.

\bibitem{Pal83b}
Zs. Páles.
\newblock {On complementary inequalities}.
\newblock {\em Publ. Math. Debrecen}, 30(1-2):75–88, 1983.

\bibitem{Pal83c}
Zs. Páles.
\newblock {On {H}ölder-type inequalities}.
\newblock {\em J. Math. Anal. Appl.}, 95(2):457–466, 1983.

\bibitem{Pal88a}
Zs. Páles.
\newblock {General inequalities for quasideviation means}.
\newblock {\em Aequationes Math.}, 36(1):32–56, 1988.

\bibitem{Pal88b}
Zs. Páles.
\newblock {Inequalities for differences of powers}.
\newblock {\em J. Math. Anal. Appl.}, 131(1):271–281, 1988.

\bibitem{Pal88c}
Zs. Páles.
\newblock {Inequalities for sums of powers}.
\newblock {\em J. Math. Anal. Appl.}, 131(1):265–270, 1988.

\bibitem{Pal89b}
Zs. Páles.
\newblock {A {H}ahn-{B}anach theorem for separation of semigroups and its
  applications}.
\newblock {\em Aequationes Math.}, 37(2-3):141–161, 1989.

\bibitem{Pal89c}
Zs. Páles.
\newblock {On comparison of homogeneous means}.
\newblock {\em Ann. Univ. Sci. Budapest. Eötvös Sect. Math.}, 32:261–266
  (1990), 1989.

\bibitem{Pal91}
Zs. Páles.
\newblock {On the convergence of means}.
\newblock {\em J. Math. Anal. Appl.}, 156(1):52–60, 1991.

\bibitem{Pal92a}
Zs. Páles.
\newblock {Comparison of two variable homogeneous means}.
\newblock In W.~Walter, editor, {\em {General Inequalities, 6 (Oberwolfach,
  1990)}}, {International Series of Numerical Mathematics}, page 59–70.
  Birkhäuser, Basel, 1992.

\bibitem{Pal00a}
Zs. Páles.
\newblock {Nonconvex functions and separation by power means}.
\newblock {\em Math. Inequal. Appl.}, 3(2):169–176, 2000.

\bibitem{PalPas18b}
Zs. Páles and P.~Pasteczka.
\newblock {On Kedlaya type inequalities for weighted means}.
\newblock {\em J. Inequal. Appl.}, (–), 2018.

\bibitem{PalPas18a}
Zs. Páles and P.~Pasteczka.
\newblock {On the best Hardy constant for quasi-arithmetic means and
  homogeneous deviation means}.
\newblock {\em Math. Inequal. Appl.}, 21(2):585–599, 2018.

\end{thebibliography}

\end{document}